\newtheorem{theorem}{Theorem}[section]
\newtheorem{proposition}[theorem]{Proposition}
\newtheorem{lemma}[theorem]{Lemma}
\newtheorem{remark}[theorem]{Remark}
\newtheorem{definition}[theorem]{Definition}
\newcommand{\bz}{\mathbb{Z}}
\newcommand{\br}{\mathbb{R}}
\newcommand{\bc}{\mathbb{C}}
\newcommand{\bs}{\mathbb{S}}
\newcommand{\lr}{\longrightarrow}
\newcommand{\wt}{\widetilde}
\renewcommand{\span}{\textrm{span}}
\newcommand{\pic}{\textrm{Pic}}
\begin{document}
\baselineskip=15.5pt
\title[Quotients of complex Stiefel manifolds]{Vector fields on certain quotients of complex Stiefel manifolds} 
\author[S. Gondhali \and P. Sankaran]{Shilpa Gondhali* \and Parameswaran Sankaran**}

\newcommand{\acr}{\newline\indent}
\address{\llap{*\,}Tata Institute of Fundamental Research,
Mumbai, 
INDIA.}
\email{shilpa@math.tifr.res.in}
\address{\llap{**\,}Institute of Mathematical Sciences,
Chennai,
INDIA}
\email{sankaran@imsc.res.in}

\subjclass[2010]{57R25}
\keywords{$m$-projective Stiefel manifolds, span, stable span, parallelizability, cohomology, characteristic classes.}
\thispagestyle{empty}
\date{}

\begin{abstract}
 We consider quotients of complex 
Stiefel manifolds by finite cyclic groups whose action is induced by the scalar multiplication on the 
corresponding complex vector space.  
We obtain a description of their tangent bundles, compute their mod $p$ cohomology and 
obtain estimates for their span (with respect to their standard differentiable structure). 
We compute the Pontrjagin and Stiefel-Whitney classes of these manifolds and give applications to 
their stable parallelizability. 
\end{abstract}

\maketitle

\section{Introduction}
Let $W_{n,k}, 1\leq k<n,$ denote the complex Stiefel manifold of 
unitary $k$-frames $(v_1,\ldots,v_k)$ in $\bc^n$ where 
it is understood that $\bc^n$ has the standard hermitian 
metric.  One has the identification $W_{n,k}=U(n)/ U(n-k)$ where $U(n)$ denotes the group of unitary transformations 
of $\bc^n$ and $U(n-k)$ is imbedded in $U(n)$ 
as the subgroup that fixes the first $k$ standard basis vectors $e_1,\dots, e_k\in \bc^n$.  

One also has the complex projective Stiefel manifold $PW_{n,k}$
defined as the quotient of $W_{n,k}$ modulo the free action of the circle group $\bs^1$ which acts via scalar 
multiplication: $z(v_1,\ldots,v_k)=(zv_1,\ldots,zv_k)$ 
for $(v_1,\ldots,v_k)\in W_{n,k}$ and $z\in \bs^1$.   
Note that $PW_{n,k}=U(n)/(\bs^1\times U(n-k))$ where $\bs^1=\{z\in \bc\mid |z|=1\}$ is identified with the centre 
of $U(n)$. Observe that $\bs^1\times U(n-k)= U(1)\times
 U(n-k)\subset U(k)\times U(n-k)\subset U(n)$ where $U(1)\subset U(k)$ is the centre of $U(k)$ and $U(k)\times U(n-k)$ 
is the subgroup of $U(n)$ that stabilizes the 
complex vector subspace $\bc^k$, spanned by $e_1,\dots,e_k$.  Thus we get an equivalent description 
$PW_{n,k}=U(n)/(U(1)\times U(n-k))$. 

We define $W_{n,k;m}$ to be the quotient of $W_{n,k}$ by the subgroup $\Gamma_m\subset \bs^1$ of $m$-\textit{th} 
roots of unity.  Thus $\dim W_{n,k;m}=\dim W_{n,k}
=k(2n-k)$. The manifolds $W_{n,k;m}$ will be referred to as the $m$-\textit{projective Stiefel manifolds}. 
Clearly $W_{n,k;m}$ is the coset space $U(n)/(\Gamma_m\times U(n-k))$ and the obvious quotient map 
$W_{n,k;m}\lr PW_{n,k}$ is the projection of a principal bundle with
 fibre and structure group $\bs^1/\Gamma_m\cong \bs^1$. Also the projection $W_{n,k}\lr W_{n,k;m}$ is 
a covering map with deck transformation group $\Gamma_m$. 
In particular $\pi_1(W_{n,k;m})\cong \Gamma_m$ and the Euler characteristic $\chi(W_{n,k;m})$ vanishes.  
The manifold $W_{n,k;m}$ is orientable since 
$\Gamma_m$ is a subgroup of the connected group $\bs^1$ which reserves the orientation on $W_{n,k}$.    
Our aim in this paper is to initiate the study of the topology of $W_{n,k;m}$.    In \S2 we describe their tangent bundle 
and give (in Theorem \ref{span}) estimates for their span and stable span.  Span and other related notions 
will be recalled in \S2; see also \cite{kz94}.
We compute, in \S3,  the mod $p$ cohomology of $W_{n,k;m}$.   We also determine the height of the 
generator of $H^2(W_{n,k;m};\bz)\cong \bz_m$.  We show that, given $n,k$ where $1\leq k<n-1$,   
$W_{n,k;m}$ is not stably parallelizable 
for all but finitely many values of $m$. See Theorem \ref{sparallelizability} for the precise statement.
When $k=n-1$, $W_{n,n-1;m}$ is parallelizable, since $W_{n,n-1}\cong SU(n-1)$. 

The case $k=1$ corresponds to the (standard) lens space $L^n(m)=\bs^{2n-1}/\Gamma_m$.   
The non-parallelizability of spheres $W_{n,1}=\bs^{2n-1}, n\neq 1,2,4$, 
already implies non-parallelizability of the lens spaces $L^n(m)$ for any $m$.  Kambe's \cite[\S4]{kambe} 
result on immersion dimension for $L^n(p), p$ an odd prime, 
and the fact that $L^n(2)=\br P^{2n-1}$ are not stably 
parallelizable except when $n=1,2,4,$ implies that `most' of $L^n(m)$, $m>1$, are not \textit{stably} parallelizable.  
From the celebrated work of Adams, we know that 
$\span(L^n(m))\leq \span(\bs^{2n-1})=\rho(2n)-1$, where $\rho(n)$ is the Radon-Hurwitz number, defined as 
$\rho((2c+1)2^{4a+b})=8a+2^b$, where $a,c\geq 0$ and $0\leq b\leq 3$. See also \cite{iwata} for lower bounds 
for span of lens spaces.

In view of this, we assume that $1<k<n$ leaving out 
the case of lens spaces from consideration for the most part. 

Our proofs involve standard arguments making use of well-known results and techniques. 
The description of the tangent bundle of $W_{n,k;m}$ relies on the description of the tangent bundle of 
$PW_{n,k}$ due to Lam \cite{lam1}.  Estimates for (stable) span 
involve well-known arguments such as those employed in the context of real projective Stiefel manifolds; 
see \cite{kz94}, \cite{kz96}.  The cohomology calculations involve
 spectral sequences and known results 
concerning the cohomology of Stiefel manifolds and of projective Stiefel manifolds (see \cite{borel} 
and \cite{agmp}).

\section{The tangent bundle of $W_{n,k;m}$}
We describe below certain canonical vector bundles over 
the manifold $W_{n,k;m}$ and establish relations among them. We shall describe its tangent bundle and obtain 
lower bounds for their span and stable span.    \\

Let $1\leq k<n$ and let $m\geq 2$.
Let $\Gamma_m\subset U(1)$ denote the group of 
$m$-th roots of unity. 
Let $\pi_m:W_{n,k;m}\lr PW_{n,k}$ and $ \pi_1:W_{n,k} \lr PW_{n,k}$ be the canonical quotient maps.  
These are projections of principal bundles with structure groups 
$U(1)/\Gamma_m$ and $U(1)$ respectively.  
Let $p_m: W_{n,k}\lr W_{n,k;m}$ be the quotient map which is the universal covering projection with deck  
transformation group $\Gamma_m.$  One also has the obvious 
covering projections $p_{m,l}\colon W_{n,k;l}\lr W_{n,k;m}$ whenever $l|m$.   
Note that $\pi_1=\pi_m\circ p_m$ and $p_m=p_{m,l}\circ p_l$.   We shall denote by
 $[v_1,\ldots,v_k]_m$ (or simply $[v_1,\ldots,v_k]$ when there is no danger of confusion) the element 
$\pi_m(v_1,\ldots,v_k)\in W_{n,k;m}$ where $(v_1,\ldots,v_k)\in W_{n,k}$.   Also, 
$\pi_1(v_1,\ldots,v_k)\in PW_{n,k}$ will be denoted $[v_1,\ldots, v_k]_0$ (or more briefly $[v_1,\ldots, v_k]$).  \\

Let $\zeta_{n,k}$ denote the complex line bundle over 
$PW_{n,k}$ associated to the principal $U(1)$-bundle $\pi_1:W_{n,k}\lr PW_{n,k}$.  Thus, 
the total space of $\zeta_{n,k}$ is the fibre product 
$W_{n,k}\times_{U(1)}\bc$.  It is isomorphic to the 
bundle over $PW_{n,k}$ whose fibre over a point $[v_1,\ldots, v_k]_0$ is the complex vector space $\bc v_1\subset \bc^n$.  
Define $\xi_{n,k;m}:=\pi_m^*(\zeta_{n,k})$ and let $\gamma_{n,k;m}$ be 
the complex line bundle associated to the principal $U(1)$-bundle obtained by extension of structure group 
via the character $\Gamma_m\subset U(1)$ of the 
$\Gamma_m$-bundle $W_{n,k}\lr W_{n,k;m}$. 
(When $m=2$, $\xi_{n,k;m}$ is the complexification of the real line bundle associated to the double cover 
$W_{n,k}\lr W_{n,k;2}$.)
Explicitly, $\gamma_{n,k;m}$ has 
total space the twisted product $W_{n,k}\times_{\Gamma_m}\bc$ where $\Gamma_m$ operates on $\bc$ by scalar multiplication.  
We have the following lemma. 
We outline a proof, which is elementary, as the lemma will be used throughout.  For any vector bundle 
$\eta$, $\eta^l$ denotes the $l$-fold tensor product 
with itself and $l\eta$, the $l$-fold Whitney sum with 
itself.  

\begin{lemma}\label{pullback}
(i)  The complex line bundle associated to the 
principal $U(1)/\Gamma_m\cong U(1)$-bundle with projection $W_{n,k;m}\lr PW_{n,k}$ is isomorphic 
to $\zeta_{n,k}^m$.\\
(ii) One has an isomorphism $\xi_{n,k;m}\cong \gamma_{n,k;m}$ of complex line bundles over $W_{n,k;m}$. 
\end{lemma}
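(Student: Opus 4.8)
The plan is to identify the relevant principal bundles as extensions of one another along the homomorphisms $\mu_m\colon U(1)\to U(1)$, $z\mapsto z^m$, the inclusion $\Gamma_m\hookrightarrow U(1)$, and the isomorphism $j\colon U(1)/\Gamma_m\to U(1)$, $z\Gamma_m\mapsto z^m$, and then to apply the functor $P\mapsto P\times_{U(1)}\bc$ sending a principal $U(1)$-bundle to its associated complex line bundle. The only fact about this functor needed beyond naturality is that, for a principal $U(1)$-bundle $P$, extending along $\mu_m$ and then forming the associated line bundle produces $(P\times_{U(1)}\bc)^{\otimes m}$; this is clear from first Chern classes, or directly from $[p,x_1]\otimes\cdots\otimes[p,x_m]\leftrightarrow[p,x_1\cdots x_m]$.

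For (i), I would verify that $\pi_m\colon W_{n,k;m}\to PW_{n,k}$, regarded as a principal $U(1)$-bundle via $j$, is the extension of $\pi_1$ along $\mu_m$. Concretely, $[v,w]\mapsto[vw^{1/m}]_m$, for $v\in W_{n,k}$, $w\in U(1)$, and $w^{1/m}$ any $m$-th root of $w$, defines a map $W_{n,k}\times_{\mu_m}U(1)\to W_{n,k;m}$: it is well defined because two $m$-th roots of $w$ differ by an element of $\Gamma_m$ and because replacing $(v,w)$ by $(vz,w)$ or by $(v,z^{m}w)$, $z\in U(1)$, yields the same value; it covers $\mathrm{id}_{PW_{n,k}}$; and it intertwines the residual $U(1)$-action on the source with the $U(1)$-action on $W_{n,k;m}$ obtained from the $U(1)/\Gamma_m$-action through $j$. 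An equivariant map of principal $U(1)$-bundles over the same base covering the identity is an isomorphism, so applying the associated-line-bundle functor shows that the line bundle of $\pi_m$ is $(W_{n,k}\times_{U(1)}\bc)^{\otimes m}=\zeta_{n,k}^m$.

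For (ii), I would show that the pullback along $\pi_m$ of the principal $U(1)$-bundle $\pi_1$ (whose total space is $W_{n,k}$) is the extension of the principal $\Gamma_m$-bundle $p_m\colon W_{n,k}\to W_{n,k;m}$ along $\Gamma_m\hookrightarrow U(1)$. The map $v\mapsto(p_m(v),v)$ is a $\Gamma_m$-equivariant map $W_{n,k}\to\pi_m^*(W_{n,k})$ (here one uses $\pi_1=\pi_m\circ p_m$), so it induces a map $W_{n,k}\times_{\Gamma_m}U(1)\to\pi_m^*(W_{n,k})$, which is an isomorphism of $U(1)$-bundles since it is a fibrewise isomorphism. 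Applying $(\,\cdot\,)\times_{U(1)}\bc$ and using naturality of pullback gives $\xi_{n,k;m}=\pi_m^*\zeta_{n,k}\cong(W_{n,k}\times_{\Gamma_m}U(1))\times_{U(1)}\bc\cong W_{n,k}\times_{\Gamma_m}\bc=\gamma_{n,k;m}$. Alternatively one can argue directly: the fibre of $\xi_{n,k;m}$ over $[v_1,\dots,v_k]_m$ is $\bc v_1\subset\bc^n$, and $[v_1,\dots,v_k;x]_{\Gamma_m}\mapsto([v_1,\dots,v_k]_m,\,xv_1)$ is a well-defined, fibrewise-linear isomorphism from $\gamma_{n,k;m}$ onto it.

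I do not expect a real obstacle here; the argument is bookkeeping with associated bundles. The two points that require attention are the well-definedness of the map in (i) --- the choice of $m$-th root and independence of the chosen representative modulo $\Gamma_m$ --- and checking, in both parts, that the maps constructed are equivariant for the correct structure-group actions once the identification $U(1)/\Gamma_m\cong U(1)$ via $j$ has been made.
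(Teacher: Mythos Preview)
Your proposal is correct and follows essentially the same route as the paper: for (i) both arguments rest on the fact that the identification $U(1)/\Gamma_m\cong U(1)$ is induced by $z\mapsto z^m$, so the associated line bundle is the $m$-th tensor power; for (ii) your alternative explicit map $[v;x]\mapsto([v]_m,\,xv_1)$ is exactly the inverse of the paper's map $(p_m(v),tv_1)\mapsto[v;t]$. One small quibble: in your well-definedness check for (i), the equivalence in $W_{n,k}\times_{\mu_m}U(1)$ is $(v,w)\sim(vz,z^{-m}w)$ for $z\in U(1)$, not the two separate replacements you wrote, but the verification goes through once phrased correctly.
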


\begin{proof}
(i) The isomorphism $U(1)/\Gamma_m\lr U(1)$ is induced 
by the homomorphism $z\mapsto z^m$ of $U(1)$ onto 
itself.  This homomorphism induces the map 
$\eta\mapsto \eta^m$, for any line bundle $\eta$ associated to a principal 
$U(1)$-bundle.  By definition, $\zeta_{n,k}$ is associated to the principal $U(1)$-bundle  
$\pi_1:W_{n,k}\lr PW_{n,k}$.  Since $\pi_m:W_{n,k;m}\lr PW_{n,k}$ is the $U(1)/\Gamma_m$-bundle associated to 
$\pi_1$, it follows that the complex line bundle associated
 to the principal $U(1)/\Gamma_m\cong U(1)$-bundle $\pi_m$ is $\zeta_{n,k}^m$.\\

(ii) By the very definition of $\gamma_{n,k;m}$, its total space has the description 
$E(\gamma_{n,k;m})=\{[x,z]\mid x\in W_{n,k}, z\in \bc\}$ where $[x,z]=[x',z']$ if 
and only if $xg=x', g^{-1}z=z'$ for some $g\in \Gamma_m$.
Also, one has $E(\xi_{n,k})=\{(p_m(v),tv_1)\mid 
t\in \bc, x=(v_1,\ldots,v_k)\in W_{n,k}\}$. Consider 
the map $f:E(\xi_{n,k;m})\lr E(\gamma_{n,k;m})$ defined as $(p_m(v),tv_1)\mapsto [v;t]$.  It is readily 
checked that this is a well-defined continuous map that covers the identity map 
of the base space $W_{n,k;m}$, and is a linear isomorphism on each fibre. This completes the proof.    \end{proof}

Observe that $\xi_{n,k;m}^l$, which corresponds to the character $\Gamma_m\lr U(1), z\mapsto z^l,$ is non-trivial 
when $1\leq l<m$.   In particular it follows 
that the order of the class of $\xi_{n,k;m}$ in the Picard group $\pic(W_{n,k;m})$ of $W_{n,k;m}$ is $m$.  
Indeed, $\xi_{n,k;m}$ is a generator of $\pic(W_{n,k;m})
\cong \bz_m.$  For, one has $H_1(W_{n,k;m};\bz)\cong \pi_1(W_{n,k;m})\cong \bz_m$.
It is not difficult to see that $H_2(W_{n,k;m};\bz)=0$. Hence $\pic(W_{n,k;m})\cong H^2(W_{n,k;m};\bz)\cong \bz_m$ 
by the universal coefficient theorem.  
 The projection $\pi_m:W_{n,k;m}\lr PW_{n,k}$ induces a surjection $H^2(PW_{n,k};\bz)\cong \bz\lr \bz_m\cong 
H^2(W_{n,k;m};\bz)$ and hence maps the generator $c_1(\zeta_{n,k})$ to the 
generator of $\bz_m$. By the naturality of  Chern classes  
we see that $c_1(\xi_{n,k;m})$ is a generator of $H^2(W_{n,k;m};\bz)\cong \bz_m$.  Summarising we 
have

\begin{lemma}\label{picard} The Picard group 
{\em $\pic(W_{n,k;m})$} is isomorphic to $\bz_m$ and is generated by $\xi_{n,k;m}$. 
\end{lemma}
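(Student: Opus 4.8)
The plan is to reduce the statement to a cohomology computation and then carry it out via Gysin sequences. Since $W_{n,k;m}$ is a smooth compact manifold it has the homotopy type of a finite CW complex, so the first Chern class furnishes a group isomorphism $c_1\colon \pic(W_{n,k;m})\xrightarrow{\ \cong\ }H^2(W_{n,k;m};\bz)$ (both identified with $[W_{n,k;m},\bp^\infty]$). Hence it is enough to prove that $H^2(W_{n,k;m};\bz)\cong\bz_m$ and that $c_1(\xi_{n,k;m})$ generates this group.

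For the computation I would use the Gysin sequence of the principal $U(1)/\Gamma_m\cong\bs^1$-bundle $\pi_m\colon W_{n,k;m}\lr PW_{n,k}$. Its Euler class is the first Chern class of the associated complex line bundle, which by Lemma~\ref{pullback}(i) equals $c_1(\zeta_{n,k}^m)=m\,c_1(\zeta_{n,k})$. The input needed about the base is that $PW_{n,k}$ is simply connected with $H^2(PW_{n,k};\bz)\cong\bz$ generated by $c_1(\zeta_{n,k})$; this follows from the Gysin sequence of the circle bundle $\pi_1\colon W_{n,k}\lr PW_{n,k}$ together with the fact that $W_{n,k}$ is $2(n-k)$-connected, hence (as $k<n$) at least $2$-connected, or one simply quotes the known cohomology of $PW_{n,k}$. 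Substituting into the degree-$2$ portion of the Gysin sequence of $\pi_m$,
\[
H^0(PW_{n,k};\bz)\xrightarrow{\ \cup\, m\,c_1(\zeta_{n,k})\ }H^2(PW_{n,k};\bz)\xrightarrow{\ \pi_m^*\ }H^2(W_{n,k;m};\bz)\lr H^1(PW_{n,k};\bz)=0,
\]
and noting that the first arrow is multiplication by $m$ on $\bz$, we conclude $H^2(W_{n,k;m};\bz)\cong\bz/m\bz$, with $\pi_m^*\bigl(c_1(\zeta_{n,k})\bigr)$ a generator.

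To finish, since $\xi_{n,k;m}=\pi_m^*(\zeta_{n,k})$, naturality of Chern classes gives $c_1(\xi_{n,k;m})=\pi_m^*\bigl(c_1(\zeta_{n,k})\bigr)$, which by the previous step generates $H^2(W_{n,k;m};\bz)\cong\bz_m$; transporting back along the isomorphism $c_1$ shows that the class of $\xi_{n,k;m}$ generates $\pic(W_{n,k;m})\cong\bz_m$. As a consistency check, the nontriviality of $\xi_{n,k;m}^l$ for $1\le l<m$ noted above already shows the order of $\xi_{n,k;m}$ in $\pic(W_{n,k;m})$ is exactly $m$, which combined with $|\pic(W_{n,k;m})|=m$ gives the same conclusion.

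The only genuine work lies in the input concerning the base: verifying that $W_{n,k}$ is $2$-connected and that $H^2(PW_{n,k};\bz)\cong\bz$ is generated by $c_1(\zeta_{n,k})$. Both are standard — the connectivity from the cell structure of complex Stiefel manifolds, the identification of $H^2(PW_{n,k};\bz)$ from the Gysin sequence of $\pi_1$ — so I do not expect a real obstacle; one must simply be careful that it is the hypothesis $k<n$ that makes $2(n-k)\ge 2$ and hence guarantees the $2$-connectivity used above.
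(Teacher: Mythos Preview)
Your argument is correct but takes a different route from the paper's. The paper computes $H^2(W_{n,k;m};\bz)$ via homology and the universal coefficient theorem: it notes $H_1(W_{n,k;m};\bz)\cong\pi_1(W_{n,k;m})\cong\bz_m$, asserts $H_2(W_{n,k;m};\bz)=0$, and deduces $H^2\cong\bz_m$; that $c_1(\xi_{n,k;m})$ generates is then argued separately by observing that $\pi_m^*\colon H^2(PW_{n,k};\bz)\to H^2(W_{n,k;m};\bz)$ is surjective. Your Gysin-sequence argument is more economical: once the Euler class of $\pi_m$ is identified (via Lemma~\ref{pullback}(i)) as $m\,c_1(\zeta_{n,k})$, both the isomorphism $H^2(W_{n,k;m};\bz)\cong\bz_m$ and the explicit generator $\pi_m^*c_1(\zeta_{n,k})=c_1(\xi_{n,k;m})$ drop out of a single exact sequence, with no need to compute $H_2$. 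The paper's route, by contrast, stays closer to covering-space data and avoids any appeal to the low-degree cohomology of $PW_{n,k}$, at the cost of leaving the vanishing of $H_2$ as an unproved claim.
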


The following isomorphism of complex vector bundles on 
$PW_{n,k}$ 
is well-known and is due to K.-Y. Lam \cite{lam1}:
$k\zeta_{n,k}\oplus \beta_{n,k}\cong n\varepsilon_\bc$ where $\varepsilon_\bc$ denotes the trivial complex line 
bundle and $\beta_{n,k}$ is the complex $(n-k)$-plane 
bundle whose fibre over $[v_1,\ldots,v_k]_0$ is the vector space $\{v_1,\ldots,v_k\}^\perp\subset \bc^n$ where the 
orthogonal complement is taken with respect to the 
standard hermitian inner product on $\bc^n$.   
Pulling back to $W_{n,k;m}$ via the projection $\pi_{n,k;m}$ we obtain an isomorphism 

\[k\xi_{n,k;m}\oplus \beta_{n,k;m}\cong n\varepsilon_\bc\eqno(1)\]
of complex vector bundles over  $W_{n,k;m}$ where $\beta_{n,k;m}:=\pi_m^*(\beta_{n,k})$. 
Tensoring with the dual bundle $\xi_{n,k;m}^\vee\cong 
\xi_{n,k;m}^{m-1}$ we get
$k\varepsilon_\bc\oplus \xi_{n,k;m}^\vee\otimes_\bc\beta_{n,k;m}\cong n\xi_{n,k;m}^\vee=n\xi_{n,k;m}^{m-1}.$
Taking duals, we obtain 
\[k\varepsilon_\bc\oplus \xi_{n,k;m}\otimes_\bc\beta_{n,k;m}^\vee\cong n\xi_{n,k;m}. \eqno(2)\]

Recall from \cite[Theorem 3.2]{lam1} that the tangent bundle $\tau PW_{n,k}$ of $PW_{n,k}$ is isomorphic to the 
(real) vector bundle $k\zeta_{n,k}^\vee\otimes_\bc
 \beta_{n,k}\oplus (k^2-1)\varepsilon_\br$. Since $\pi_{m}\colon W_{n,k;m}\lr PW_{n,k}$ is a \textit{principal} 
$\bs^1$-bundle, we have 
\[\tau W_{n,k;m}\cong k\xi_{ n,k;m}^\vee \otimes_\bc \beta_{n,k;m}\oplus k^2\varepsilon_\br. \eqno(3)\] 
In the above isomorphism, and in the sequel, we have used the same symbol to denote  
a complex vector bundle and its underlying real vector 
bundle, as there is no risk of confusion.

\begin{remark} {\em Assume that $k$ is even, equivalently 
$W_{n,k;m}$ is even dimensional.
Then $\tau W_{n,k;m}$ 
has a complex structure arising from the isomorphism of vector bundles given in (3). Thus $W_{n,k;m}$ admits an 
almost complex structure.  Recall that,  by the work of Wang \cite{wang},  
$W_{n,k}=SU(n)/SU(n-k)$ admits a complex structure invariant under the left action of $SU(n)$. When $m$ divides 
$n$, $\Gamma_m$ 
is contained in the centre $\Gamma_n$ of $SU(n)$. In this case the action of $\Gamma_m$ on $W_{n,k}$ preserves 
the complex structure.  We conclude that 
$W_{n,k;m}$ admits a complex structure when it is even dimensional and $m|n$. 
}
\end{remark}

Using the isomorphism (1) and the fact that $\varepsilon_\bc=2\varepsilon_\br$, we obtain an isomorphism 
\[\tau W_{n,k;m}\oplus k^2\varepsilon_\br
\cong k(\xi_{n,k;m}^\vee\otimes _\bc\beta_{n,k;m}\oplus k\varepsilon_\bc)=nk\xi_{n,k;m}^\vee \eqno(4)\]
of real vector bundles.

Recall that the \textit{span} of a smooth manifold $M$ is the maximum number $r\geq 0$ for which there exist 
$r$ everywhere linearly independent vector fields on $M$. Equivalently 
span of $M$ is the maximum number $r$ such that 
$\tau M\cong r\varepsilon_\br\oplus \eta$ for some vector bundle $\eta$. The \textit{stable span} of $M$ is the maximum 
number $s$ such that $\tau M\oplus t\varepsilon_\br\cong (s+t)\varepsilon_\br\oplus \theta$ for some vector bundle 
$\theta$ where $t>0$.  Indeed one may take $t=1$ in
 the above definition of stable span.  The rank of $\theta$ is then called the \textit{geometric dimension} of 
$\tau M$. We denote the span of $M$ by $\span(M)$.  The notions of span, stable span, and geometric dimension 
can be extended in an obvious manner to any vector bundle. 
 The reader may  refer to \cite{kz94} and \cite{kz96} for a detailed discussion on the vector field 
problem, which asks for the determination of the span of a given smooth manifold.

\begin{theorem} \label{span}
Suppose that $2\leq k<n$ and $m\geq 2$.  Then:\\
(i)  $\textrm{{\em span}}(W_{n,k;m})>\textrm{{\em stable span}}(PW_{n,k})\geq \dim(W_{n,k;m})-2n+1$.
Moreover, when $n$ is even, $\textrm{{\em span}}(W_{n,k;m})>\dim (W_{n,k;m})-2n+3.$\\
(ii)  $\textrm{{\em span}}(W_{n,k;m})>\textrm{{\em stable span}}(W_{n,k-1;m})$.\\ 
(iii) $W_{n,n-1;m}$ is parallelizable.
\end{theorem}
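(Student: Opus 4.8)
I would handle the three assertions separately, and (iii) is the one I would do in full. The point is that $W_{n,n-1}$ is diffeomorphic to a Lie group: sending a unitary $(n-1)$-frame $(v_1,\dots,v_{n-1})$ in $\bc^n$ to the unique matrix of $SU(n)$ whose first $n-1$ columns are $v_1,\dots,v_{n-1}$ (the last column being forced by $\det=1$) is a diffeomorphism $W_{n,n-1}\cong SU(n)$, consistent with the identification $W_{n,k}=SU(n)/SU(n-k)$. Under it, the generator $\omega=e^{2\pi i/m}$ of $\Gamma_m$, which sends $(v_1,\dots,v_{n-1})$ to $(\omega v_1,\dots,\omega v_{n-1})$, corresponds to right translation by $\delta:=\mathrm{diag}(\omega,\dots,\omega,\omega^{1-n})\in SU(n)$, and $D_m:=\langle\delta\rangle$ is cyclic of order $m$; hence $W_{n,n-1;m}\cong SU(n)/D_m$, the space of right cosets $gD_m$. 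Every right-invariant vector field $X$ on $SU(n)$ satisfies $(R_g)_*X=X$ for all $g$, in particular for $g=\delta$, so it is $D_m$-invariant and descends to $SU(n)/D_m$; a basis of right-invariant fields, being pointwise linearly independent, descends to a global frame, proving parallelizability.

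For the first inequality in (i): $\pi_m\colon W_{n,k;m}\to PW_{n,k}$ is a principal $\bs^1$-bundle, so its vertical tangent bundle is trivialized by the fundamental vector field of the free $\bs^1$-action and $\tau W_{n,k;m}\cong\varepsilon_\br\oplus\pi_m^*\tau PW_{n,k}$. If $\mathrm{stable\ span}(PW_{n,k})=s$, write $\tau PW_{n,k}\oplus\varepsilon_\br\cong(s+1)\varepsilon_\br\oplus\theta$; pulling back and using the splitting gives $\tau W_{n,k;m}\cong(s+1)\varepsilon_\br\oplus\pi_m^*\theta$, so $\mathrm{span}(W_{n,k;m})\ge s+1>\mathrm{stable\ span}(PW_{n,k})$. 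For the second inequality, note $\dim W_{n,k;m}-2n+1=\dim W_{n-1,k-1}$, so the claim is that $PW_{n,k}$ has stable span at least the fibre dimension of the fibration $W_{n-1,k-1}\to PW_{n,k}\to\bc P^{n-1}$. I would obtain this by the method of \cite{kz94,kz96}: start from Lam's formula $\tau PW_{n,k}\cong k\zeta_{n,k}^\vee\otimes_\bc\beta_{n,k}\oplus(k^2-1)\varepsilon_\br$, combine it with the $PW_{n,k}$-versions of the relations (1)--(2) to get $\tau PW_{n,k}\oplus(k^2+1)\varepsilon_\br\cong nk\,\zeta_{n,k}^\vee$, and then bound the geometric dimension of the stable class of $\tau PW_{n,k}$ via the fact that the fibre $W_{n-1,k-1}$ is a $\pi$-manifold together with obstruction theory over $\bc P^{n-1}$, the ``defect'' $2n$ being essentially twice the complex dimension of the base. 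The improvement by $2$ when $n$ is even should come from the extra structure available then, e.g. two more vector fields propagated from $\bs^{2n-1}\subset\mathbb{H}^{n/2}$ or a sharper treatment of $\zeta_{n,k}^\vee$.

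For (ii): let $q\colon W_{n,k;m}\to W_{n,k-1;m}$ be the map forgetting the last frame vector. It is the unit-sphere bundle $S(\nu)$ of a complex $(n-k+1)$-plane bundle $\nu$ over $W_{n,k-1;m}$ (the fibre over a class of $(v_1,\dots,v_{k-1})$ being the unit sphere of $\{v_1,\dots,v_{k-1}\}^{\perp}\subset\bc^n$, carrying the $\Gamma_m$-twist), so its fibres are odd spheres and — crucially — the vertical tangent bundle $T_{\mathrm{vert}}$ has a nowhere-zero section $v\mapsto Jv$ coming from the complex structure $J$ of $\nu$. Hence $T_{\mathrm{vert}}\cong\varepsilon_\br\oplus T'_{\mathrm{vert}}$ and
\[\tau W_{n,k;m}\cong T_{\mathrm{vert}}\oplus q^*\tau W_{n,k-1;m}\cong\bigl(q^*\tau W_{n,k-1;m}\oplus\varepsilon_\br\bigr)\oplus T'_{\mathrm{vert}}.\]
If $\mathrm{stable\ span}(W_{n,k-1;m})=s$, then $\tau W_{n,k-1;m}\oplus\varepsilon_\br$ has $s+1$ everywhere independent sections, which pull back to $q^*\tau W_{n,k-1;m}\oplus\varepsilon_\br$; combining them with the displayed splitting gives $\mathrm{span}(W_{n,k;m})\ge s+1>\mathrm{stable\ span}(W_{n,k-1;m})$. (The same ``the sphere direction absorbs one stabilization'' trick is exactly what makes the argument in (i) work.)

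The genuinely delicate point is the absolute bound $\mathrm{stable\ span}(PW_{n,k})\ge\dim W_{n-1,k-1}$ needed in (i): although $\tau PW_{n,k}$ is stably isomorphic to $nk\,\zeta_{n,k}^\vee$ and the fibre of $PW_{n,k}\to\bc P^{n-1}$ is stably parallelizable, the vertical tangent bundle of that fibration is \emph{not} stably trivial, so the $\zeta_{n,k}^\vee$-contributions must be tracked carefully through the obstruction-theoretic count — this is where the cohomology of $PW_{n,k}$ (cf. \cite{agmp}) and Lam's computations enter — and the parity-of-$n$ refinement requires bookkeeping of the quaternionic structure present when $n$ is even. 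Everything else, namely (iii) and the two principal/sphere-bundle comparisons, is routine.
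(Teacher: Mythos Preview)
Your treatment of (iii) and of the first inequality in (i) is correct and matches the paper exactly.

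For (ii) your argument is correct but takes a different route from the paper. You use only the general geometry of the situation: $W_{n,k;m}\to W_{n,k-1;m}$ is the unit sphere bundle of a complex vector bundle, so the Hopf field $v\mapsto iv$ trivializes a line in the vertical tangent bundle and one gets $\tau W_{n,k;m}\cong(q^*\tau W_{n,k-1;m}\oplus\varepsilon_\br)\oplus T'_{\mathrm{vert}}$ directly. The paper instead performs an explicit computation with the bundles $\xi_{n,k;m}$ and $\beta_{n,k;m}$: using $p^*\beta_{n,k-1;m}\cong\beta_{n,k;m}\oplus\xi_{n,k;m}$ and the formula $(3)$ for $\tau W_{n,k;m}$, it rewrites the tangent bundle as $p^*(\tau W_{n,k-1;m}\oplus\varepsilon_\br)\oplus\xi_{n,k;m}^\vee\otimes\beta_{n,k;m}$. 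Your argument is cleaner and more conceptual; the paper's has the advantage of identifying the leftover piece explicitly as $\xi_{n,k;m}^\vee\otimes\beta_{n,k;m}$.

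For the numerical bound in (i) you are overcomplicating matters, and the detour through ``the fibre $W_{n-1,k-1}$ is a $\pi$-manifold'' is a red herring. The paper's argument is purely a dimension count: once you know (as you do) that $\tau PW_{n,k}\oplus(k^2+1)\varepsilon_\br\cong nk\,\zeta_{n,k}$, observe that $\zeta_{n,k}=q^*\zeta_{n,1}$ for $q\colon PW_{n,k}\to\bc P^{n-1}$, and that any complex rank-$nk$ bundle over the $(2n-2)$-dimensional space $\bc P^{n-1}$ has at least $nk-(n-1)$ independent complex sections; this gives $\mathrm{stable\ span}(PW_{n,k})\ge 2(nk-n+1)-(k^2+1)=\dim W_{n,k;m}-2n+1$ with no further input. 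No obstruction theory for the fibration $W_{n-1,k-1}\to PW_{n,k}\to\bc P^{n-1}$ is needed. Your instinct about the quaternionic structure when $n$ is even is right: the paper makes it precise by noting that $\zeta_{n,1}\oplus\zeta_{n,1}^\vee$ has structure group $Sp(1)$, hence is pulled back from $\mathbb{H}P^{n/2-1}$, and the same dimension count over this smaller base improves the bound by $2$.
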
 

\begin{proof} (i) 
Since $\pi_m$ is a principal $\bs^1$-bundle, one has the bundle isomorphism 
$\tau (W_{n,k;m})\cong \pi^*(\tau PW_{n,k})\oplus \varepsilon_\br=\pi^*(\tau PW_{n,k}\oplus \varepsilon_\br)$. 
Hence $\span(W_{n,k;m})>\textrm{stable span}(PW_{n,k})$.
 Now consider the projection $q:PW_{n,k}\lr 
\bc P^{n-1}$ defined as $[v_1,\ldots,v_k]\mapsto [v_1]$.  The stable tangent bundle $\tau PW_{n,k}\oplus 
(k^2+1)\varepsilon_\br$ is isomorphic 
to $nk\zeta_{n,k}=q^*(nk\zeta_{n,1})$.  Clearly the bundle $nk\zeta_{n,1}$ over $\bc P^{n-1}$ contains 
a trivial real vector bundle of rank  
$2(nk-(n-1))$.  (See \cite{husemoller}.) Therefore the stable span of $PW_{n,k}$ 
is at least $2nk-2(n-1)-(k^2+1)=\dim W_{n,k;m}-2n+1.$

Let $n$ be even. The complex $2$-plane bundle $\zeta_{n,1}\oplus \zeta_{n,1}^\vee$ evidently admits a 
reduction of structure group to $SU(2)=Sp(1)$. 
Hence it is the underlying complex vector bundle of a 
quaternionic line bundle.  Any such bundle can be classified by a map into the 
quaternionic projective space $\mathbb{H}P^r$ where $r 
=\lfloor (1/4)\dim_\br(\bc P^{n-1})\rfloor=n/2-1$.  That is, 
there exists a continuous map $h:\bc P^{n-1}\lr \mathbb{H}P^{r}$ such that $h^*(\omega)\cong 
\zeta_{n,1}\oplus \zeta_{n,1}^\vee$ where $\omega$ is the canonical quaternionic line bundle over $\mathbb{H} P^r$. 
 The underlying real vector bundle $(nk/2)\omega$
 admits $(2nk-4r)\varepsilon_\br$ as a summand and so, working with underlying real vector bundles throughout, we have  
$nk\zeta_{n,1}= (nk/2)(q^*(\zeta_{n,1}\oplus\zeta_{n,1}^\vee))= h^*((nk/2 ).\omega)=(2nk-4r)\varepsilon_\br\oplus\eta$ 
for some real vector bundle $\eta$. 
 As before, it follows that $\textrm{span}(W_{n,k;m})>\textrm{stable span}(PW_{n,k})\geq \dim W_{n,k;m}-2n+3$.

(ii) Consider the fibre bundle projection $W_{n,k}\lr W_{n,k-1}$ with fibre $\bs^{2n-2k+1}$. 
Since it is $\Gamma_m$-equivariant, we obtain a $\bs^{2n-2k+1}$-bundle with projection $p:W_{n,k;m}\lr W_{n,k-1;m}$.  
 Note that $p^*(\xi_{n,k-1;m})=\xi_{n,k;m}$ and $p^*(\beta_{n,k-1;m})=\beta_{n,k;m}\oplus \xi_{n,k;m}$. 
Write $k^2\varepsilon_\br$ as $(k-1)^2\varepsilon_\br\oplus(k-1)(\xi^\vee_{n,k;m}\otimes_\bc \xi_{n,k;m})\oplus\varepsilon_\br$. Substituting this in the expression 
(3) for $\tau W_{n,k;m}$ and observing that $k\xi_{n,k;m}^\vee\otimes\beta_{n,k;m}\oplus (k-1)\xi_{n,k;m}^\vee\otimes_\bc
\xi_{n,k;m}=(k-1)\xi_{n,k;m}^\vee\otimes 
(\beta_{n,k;m}\oplus \xi_{n,k;m})\oplus \xi_{n,k;m}^\vee \otimes \beta_{n,k;m}$, we obtain that 
$\tau W_{n,k;m}\cong p^*(\tau W_{n,k;m-1})\oplus \varepsilon_\br\oplus \xi^\vee_{n,k;m}\otimes \beta_{n,k;m}\cong 
p^*(\tau W_{n,k;m-1}\oplus \varepsilon_\br)\oplus
 \xi^\vee_{n,k;m}\otimes \beta_{n,k;m}$. 
Therefore $\span (W_{n,k;m})\geq \span(\tau W_{n,k-1;m}\oplus \varepsilon_\br)>\textrm{stable span}(W_{n,k-1;m})$ as asserted. 

(iii) Note that $W_{n,n-1}\cong SU(n)$.  Therefore $W_{n,n-1;m},$ being a quotient of a Lie group by a 
finite subgroup, is parallelizable. 
\end{proof}

We refer the reader to \cite{yoshida} and \cite{iwata} for the span of lens spaces.

\begin{proposition}\label{stablespan} Let $2\leq  k<n$ and let $m\geq 2$.
One has 
\[\textrm{{\em span}}(W_{n,k;m})=\textrm{{\em stable span}}(W_{n,k;m})\] in each of the following cases: 
(i) $k$ is even, (ii) $n$ is odd, and, (iii) $n\equiv 2 \mod 4$.  
\end{proposition}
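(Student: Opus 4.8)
The plan is to show that in each of the three cases the ``stably trivial part'' of $\tau W_{n,k;m}$ can be destabilized, i.e.\ that an $s$-dimensional stably trivial subbundle of the stable tangent bundle already splits off $s\varepsilon_\br$ from $\tau W_{n,k;m}$ itself. The standard tool is the comparison of the dimension of the manifold with the obstruction to destabilization: if $V$ is a real vector bundle of rank $r$ over a CW-complex $X$ of dimension $d$ and $V\oplus j\varepsilon_\br$ contains a trivial subbundle of rank $s+j$ with $s\geq d-r$, then $V$ itself contains a trivial subbundle of rank $s$. Here $X=W_{n,k;m}$ has $d=\dim W_{n,k;m}=k(2n-k)$ and $V=\tau W_{n,k;m}$ has $r=d$, so the relevant inequality $s\ge d-r=0$ is automatically satisfied once one knows $s\ge 0$; the subtlety is that this naive count only gives destabilization down to the point where $r = d$, and one must instead argue on a bundle of smaller rank. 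So the real plan is: exhibit $\tau W_{n,k;m}$ as $\zeta\oplus (\text{big trivial summand})$ where $\zeta$ has small rank, destabilize within $\zeta$, and add back the trivial summand.

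Concretely, I would start from isomorphism (3), $\tau W_{n,k;m}\cong k\,\xi_{n,k;m}^\vee\otimes_\bc\beta_{n,k;m}\oplus k^2\varepsilon_\br$, and from (4), $\tau W_{n,k;m}\oplus k^2\varepsilon_\br\cong nk\,\xi_{n,k;m}^\vee$. The point is that $\tau W_{n,k;m}$ is stably equivalent to $nk\,\xi^\vee$ where $\xi=\xi_{n,k;m}$ is a complex line bundle pulled back from $\bc P^{n-1}$ via the composite $W_{n,k;m}\to PW_{n,k}\to \bc P^{n-1}$. In case (i) $k$ even, $\tau W_{n,k;m}$ itself already has a complex structure (Remark above), namely $\tau W_{n,k;m}\cong \mathbb{C}\text{-bundle of complex rank }k(2n-k)/2$; for a complex vector bundle $E$ over a complex of real dimension $2N$, the stable complex span equals the complex span once $2\,\mathrm{rank}_\bc E \ge 2N$, i.e.\ there is no room to destabilize further, which forces span $=$ stable span. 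More precisely: a complex bundle $E$ of complex rank $\ell$ over $X^{2N}$ with $\ell \geq N$ has $E\cong \ell'\varepsilon_\bc\oplus E'$ whenever $E$ is stably $\cong \ell'\varepsilon_\bc\oplus(\text{something})$ with $\ell-\ell'\geq N-\ell'$... so I would phrase it as: the obstructions to splitting off real trivial summands from $\tau$ live in cohomology groups that vanish above dimension $d$, and stabilizing by one $\varepsilon_\br$ doesn't change these in the relevant range because $\tau$ already has real rank $=d$. In cases (ii) and (iii), $W_{n,k;m}$ is odd-dimensional (when $k$ is odd), so a classical result (the top obstruction to destabilizing the last vector field — the Euler class, or Koschorke's semicharacteristic obstruction) vanishes: for odd-dimensional manifolds, stable span $= $ span provided the relevant Kervaire semicharacteristic obstruction vanishes, and for $n\equiv 2\bmod 4$ or $n$ odd one checks this obstruction vanishes using the cohomology of $W_{n,k;m}$ computed in \S3 (in particular, the mod $2$ cohomology and the structure forced by $H^2\cong\bz_m$).

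I expect the main obstacle to be case (iii), $n\equiv 2\bmod 4$ with $k$ odd: here the manifold is odd-dimensional but not obviously handled by a complex-structure argument, and the vanishing of the last obstruction (a $\bz_2$-valued semicharacteristic-type invariant, in the spirit of Koschorke's work on the vector field problem, or the argument in \cite{kz94},\cite{kz96} for real projective Stiefel manifolds) must be extracted from explicit knowledge of $H^*(W_{n,k;m};\bz_2)$ and of the Stiefel--Whitney classes computed later in the paper. The plan there is to mimic the projective-Stiefel-manifold argument: reduce the question to whether $w(\tau W_{n,k;m})$ and the mod $2$ cohomology permit a nowhere-zero section of the appropriate sphere bundle, and use that $2n\equiv 0 \bmod 4$ kills the obstruction that survives when $n\equiv 0\bmod 4$. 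Cases (i) and (ii) I expect to be comparatively routine: (i) from the almost complex structure plus a dimension count, and (ii) because odd-dimensional manifolds with vanishing Euler semicharacteristic obstruction always have span $=$ stable span, and $\chi(W_{n,k;m})=0$ together with $n$ odd makes the relevant $\bz_2$-obstruction vanish for parity reasons.
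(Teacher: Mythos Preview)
Your proposal is in the right neighborhood but misses the organizing principle of the paper's proof and contains a genuine gap in case~(i).

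The paper's argument is much shorter than what you outline: it simply verifies a handful of elementary hypotheses and then invokes specific results of Koschorke (Theorem~20.1 and Corollaries~20.9, 20.10 of \cite{koschorke}). The hypotheses are: (a)~$\span(W_{n,k;m})\geq k^2\geq 4$, which is immediate from the $k^2\varepsilon_\br$ summand in~(3); (b)~$\chi(W_{n,k;m})=0$ and $w_1(W_{n,k;m})=0$; (c)~$w_2(W_{n,k;m})=0$ whenever $nk$ is even, which follows from~(4); and (d)~the residue class of $d=k(2n-k)$: $d$ is even if $k$ is even, $d\equiv 1\pmod 4$ if $k,n$ are both odd, and $d\equiv 3\pmod 8$ if $k$ is odd and $n\equiv 2\pmod 4$. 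Koschorke's results then give $\span=\textrm{stable span}$ directly. You never isolate the lower bound $\span\geq 4$; this is essential input for Koschorke's theorems (they require a positive a~priori lower bound on span), and none of your destabilization heuristics substitute for it.

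Your treatment of case~(i) via the almost complex structure does not work as stated. An almost complex structure on $\tau M$ lets you compare \emph{complex} span with complex stable span by obstruction theory, but the quantity at issue is the \emph{real} stable span, and stabilizing by a single $\varepsilon_\br$ leaves the complex category. Your fallback sentence (``obstructions live in groups vanishing above dimension $d$, and stabilizing by one $\varepsilon_\br$ doesn't change these because $\tau$ already has rank $d$'') is exactly the naive dimension count you yourself flag as insufficient at the start: a rank-$d$ bundle over a $d$-complex is in the critical range where the top obstruction \emph{can} change under stabilization. The paper sidesteps this entirely by citing Koschorke's Theorem~20.1 for even-dimensional manifolds with $\chi=0$ and span~$\geq 4$. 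For cases~(ii) and~(iii) your instinct to use semicharacteristic-type obstructions \`a la Koschorke is correct, but again the paper does not compute any semicharacteristic: it just records the residue of $d$ modulo $4$ or $8$ together with $w_1=w_2=0$ and reads off the conclusion from Koschorke's corollaries.
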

\begin{proof}  From (3) we obtain that $\span(W_{n,k;m})\geq k^2\geq 4$.  Since $W_{n,k;m}$ is 
orientable, the first Stiefel-Whitney class  $w_{1}(W_{n,k;m})$ vanishes.  As observed already in the introduction, 
the Euler characteristic $\chi(W_{n,k;m})$ vanishes.
 Furthermore, it follows from (4) that the Stiefel-Whitney classes $w_1(W_{n,k;m}),w_2(W_{n,k;m})$ vanish when 
$nk$ is even.  (We shall give a formula for the total
 Stiefel-Whitney class of $W_{n,k;m}$ in 
Proposition \ref{pontrjagin}, which also implies that $w_i(W_{n,k;m})=0, i=1,2$ when $nk$ is even.)

Our hypotheses on $k$ and $n$ imply that 
$d=\dim(W_{n,k;m})=k(2n-k)$ is even when $k$ is even, $d\equiv 1\mod 4$ when both $k$ and $n$ are  odd, 
and $d\equiv 3 \mod 8$ when $k$ is odd and $n\equiv 2 \mod 4$. 
 The proposition now follows from the work of Koschorke.  More precisely,  (i) follows from 
\cite[Theorem 20.1]{koschorke} and, 
assuming, as we may, that $k$ is odd, (ii) and (iii) follow, respectively, from  Corollaries 20.9, 
and 20.10 of \cite{koschorke}. \end{proof}

\begin{remark}{\em Recall that the generalized vector 
field problem asks  for the determination of the geometric dimension of multiples of the Hopf bundle  
$\xi_{n}$ over the real projective space $\mathbb{R}P^{n}$. When $m=2$,  $W_{n,1;2}=\mathbb{R}P^{2n-1}$ 
and the bundle $\xi_{n,1;2}$ is isomorphic, as a real vector bundle,
 to $2\xi_{n-1}$.  Denoting by $p: W_{n,k;2}\longrightarrow \mathbb{R}P^{2n-1}$ the  projection 
$[v_1,\ldots,v_k]\mapsto [v_1]$ we see that 
$\xi_{n,k;2}\cong p^*(2\xi_{2n-1})$.   Therefore, 
using the bundle isomorphism (4), we have 
\[\textrm{stable span} (W_{n,k;2})\geq \textrm{span}(2nk\xi_{2n-1})-k^2. \eqno(5)\]  Invoking Proposition  
\ref{stablespan} we 
obtain the following lower bound:
\[\textrm{span}(W_{n,k;2})\geq \textrm{span}(2nk\xi_{2n-1})-k^2\eqno(5) \]
when $k$ is even, or $n$ is odd, or $n\equiv 2 \mod 4$.  
Although the generalized vector field problem is yet to be solved completely, the precise value of the span 
of $r\xi_{n}$ is known from the work of Lam \cite[Theorem 1.1]{lam2}
 when $r=8l+p,n=8m+q$, $l\geq m\geq 0$, ${l\choose m}$ is odd, $0\leq p,q\leq 7$.   See also \cite{dl}. 
In many cases,  (6)  yields a better lower bound than Theorem \ref{span}(i). }
\end{remark}

We conclude this section with the following 

\begin{proposition}\label{universal}
Let $m\geq 2$ be an integer. 
Let $X$ be any topological space and let $\xi$ be a complex line bundle over $X$ such that 
(i) $\xi$ admits a reduction of structure group to $\bz_m$, and, (ii) $n\xi $ admits $k$ everywhere 
$\mathbb{C}$-linearly independent cross-sections. Then there exists a continuous map $f:X\lr W_{n,k;m}$ 
such that $f^*(\xi_{n,k;m})\cong \xi$. 
\end{proposition}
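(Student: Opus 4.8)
The plan is to construct $f$ by hand: trivialize the given data over the principal $\Gamma_m$-bundle underlying the reduction of structure group of $\xi$, extract from the sections of $n\xi$ a $\Gamma_m$-equivariant map into the Stiefel manifold $W_{n,k}$ by Gram--Schmidt orthonormalization, and then pass to $\Gamma_m$-quotients. To begin, hypothesis (i) says precisely that there is a principal $\Gamma_m$-bundle $\rho\colon P\lr X$ together with an isomorphism $\xi\cong P\times_{\Gamma_m}\bc$, where $\Gamma_m\subset U(1)$ acts on $\bc$ by scalar multiplication. Since $\Gamma_m$ preserves the standard hermitian form on $\bc$, the bundle $\xi$ carries a compatible hermitian metric, and, more to the point, the pullback $\rho^*\xi$ is canonically trivial, $\rho^*\xi\cong P\times\bc$, in a way for which a section $s$ of $\xi$ becomes a map $\bar s\colon P\lr\bc$ with the equivariance $\bar s(p\cdot g)=g^{-1}\bar s(p)$ for $g\in\Gamma_m$. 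Consequently, the $k$ everywhere linearly independent cross-sections of $n\xi$ supplied by hypothesis (ii) pull back, under the induced trivialization $\rho^*(n\xi)\cong P\times\bc^n$, to maps $\sigma_1,\dots,\sigma_k\colon P\lr\bc^n$ with $\sigma_i(p\cdot g)=g^{-1}\sigma_i(p)$ and with $\sigma_1(p),\dots,\sigma_k(p)$ linearly independent in $\bc^n$ for every $p\in P$.

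Next, I apply the Gram--Schmidt process fibrewise to $(\sigma_1,\dots,\sigma_k)$, using the standard hermitian inner product on $\bc^n$, to obtain a continuous map $\Psi=(\tau_1,\dots,\tau_k)\colon P\lr W_{n,k}$ into the space of unitary $k$-frames. Because each $g\in\Gamma_m$ acts on $\bc^n$ as a scalar of modulus $1$, the Gram--Schmidt process commutes with this action, so $\Psi$ inherits the analogous equivariance relating the $\Gamma_m$-action on $P$ to the scalar-multiplication action on $W_{n,k}$; in particular $\Psi$ carries each $\rho$-fibre of $P$ (a $\Gamma_m$-orbit) into a single $p_m$-fibre of $W_{n,k}$ (a $\Gamma_m$-orbit). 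Hence $\Psi$ descends to a continuous map $f\colon X=P/\Gamma_m\lr W_{n,k}/\Gamma_m=W_{n,k;m}$, namely $f(\rho(p))=[\tau_1(p),\dots,\tau_k(p)]_m$. (Note that $W_{n,k;m}$ itself is an instance of the data of the proposition: $\xi_{n,k;m}$ has $\bz_m$-structure by construction, and $n\xi_{n,k;m}$ admits $k$ independent sections by the isomorphism $(2)$; so some such $f$ had better exist.)

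It remains to verify that $f^*\xi_{n,k;m}\cong\xi$. By Lemma \ref{pullback}(ii) the bundle $\xi_{n,k;m}\cong\gamma_{n,k;m}$ is the complex line bundle associated, via the inclusion character $\Gamma_m\subset U(1)$, to the principal $\Gamma_m$-bundle $p_m\colon W_{n,k}\lr W_{n,k;m}$. Therefore $f^*\xi_{n,k;m}$ is the line bundle associated in the same way to the pulled-back principal $\Gamma_m$-bundle $f^*(p_m)$, and since $\xi$ is itself associated via the same character to $\rho\colon P\lr X$, it suffices to produce an isomorphism of principal $\Gamma_m$-bundles $P\cong f^*(p_m)$ over $X$. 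Such an isomorphism is given by $p\mapsto(\rho(p),\Psi(p))\in f^*(p_m)=\{(x,w)\in X\times W_{n,k}\mid f(x)=p_m(w)\}$: it covers the identity of $X$, is bijective on fibres, and is $\Gamma_m$-equivariant by the equivariance of $\Psi$ noted above. Naturality of the associated-bundle construction then yields $f^*\xi_{n,k;m}\cong\xi$, as required.

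The step demanding genuine care is the bookkeeping of the various $\Gamma_m$-actions in the last paragraph: the equivariance of $\Psi$ must match the principal $\Gamma_m$-action on $W_{n,k}\lr W_{n,k;m}$ \emph{on the nose}, not merely up to the automorphism $g\mapsto g^{-1}$ of $\Gamma_m$, since the latter would only give $f^*\xi_{n,k;m}\cong\overline{\xi}\cong\xi^{m-1}$ rather than $\xi$. Depending on the sign conventions chosen for the trivialization $\rho^*\xi\cong P\times\bc$ and for the balanced product defining $\gamma_{n,k;m}$, one may need to replace the orthonormal frame $\Psi$ by its entrywise complex conjugate $(\overline{\tau_1},\dots,\overline{\tau_k})$ — which is again a unitary $k$-frame, now satisfying $\overline{\tau_i}(p\cdot g)=g\,\overline{\tau_i}(p)$ — so as to make the two actions agree exactly. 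Once the conventions are aligned the argument is routine; a further harmless point is that the Gram--Schmidt deformation pins down $f$ only up to homotopy, which is immaterial here since only the existence of some $f$ with $f^*\xi_{n,k;m}\cong\xi$ is asserted.
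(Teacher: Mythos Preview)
Your proof is correct, and in fact more complete than the paper's on the point of verifying $f^*\xi_{n,k;m}\cong\xi$, which the paper leaves implicit. But the route is genuinely different. The paper does not pull the given sections back to the principal $\Gamma_m$-cover and orthonormalize. Instead it first uses hypothesis~(ii) together with a hermitian metric to split $n\xi\cong k\varepsilon_\bc\oplus\theta$, then dualizes and tensors with $\xi$ to obtain an isomorphism $n\varepsilon_\bc\cong k\xi\oplus\eta$ over $X$ itself. Since the associated $W_{n,k;m}$-bundle of a \emph{trivial} rank-$n$ hermitian bundle is just $X\times W_{n,k;m}$, it now suffices to exhibit a section of $W_{n,k;m}(k\xi\oplus\eta)$: the paper takes the unit vector $\wt x\in p^{-1}(x)$, viewed as an element of the fibre of $\xi$ at $x$, and places it once in each of the $k$ orthogonal summands $\xi$, which is automatically an orthonormal $k$-frame up to the $\Gamma_m$-ambiguity in the choice of $\wt x$. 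What this buys is that no Gram--Schmidt is needed and the delicate action-bookkeeping you flag in your final paragraph is hidden inside the dualize-and-tensor step (the passage from ``sections of $n\xi$'' to ``copies of $\xi$ inside a trivial bundle'' absorbs exactly the sign of the $\Gamma_m$-equivariance). Conversely, your approach is more elementary and transparent---it works directly with the data given and makes the equivariance explicit---at the cost of having to sort out the $g$ versus $g^{-1}$ convention by hand, which you do correctly.
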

\begin{proof}
Let $p:\wt{X}\lr X$ be a regular covering projection 
with deck transformation group $\Gamma_m\cong\bz_m$ such that $\xi$ is isomorphic to the bundle with projection 
$E:=\wt{X}\times_{\Gamma_m}\bc\lr X$ where $\Gamma_m$ acts on $\bc$ via a character $\Gamma_m\lr \bc^*$.  
The existence of such a covering is the content of (i).  
We identify the total space of $\xi$ with $E$.  
Observe that 
$\xi$ admits a hermitian metric: $(e,e')\mapsto z\bar{z}'$ 
is a hermitian metric where $e=[x,z],e'=[x,z']\in E$, $x\in X,~z,z'\in \bc$.  
Consequently $\xi^\vee$ also admits a hermitian metric.

In view of (ii) and the existence of a hermitian metric 
on $n\xi$, we have a splitting $n\xi\cong k\varepsilon_\bc\oplus \theta$.  
Taking duals, we get 
$n\xi^\vee\cong k\varepsilon_\bc\oplus \theta^\vee$. 
Tensoring with $\xi$, we see that 
$n\varepsilon_\bc=k\xi\oplus\eta$ where $\eta:=\xi\otimes\theta^\vee$. Then $\eta$ also admits a hermitian metric  
which is such that each copy of $\xi$ and $\eta$ are pairwise orthogonal.  

For any hermitian vector bundle $\nu$ of rank $n$ over $X$, one has an associated $W_{n,k;m}$-bundle, 
denoted $W_{n,k;m}(\nu)$, defined as the space of all $\Gamma_m$-equivalence 
classes of unitary $k$-frames in each fibre of $\nu$. 
When $\nu$ is trivial, this is just the product bundle $X\times W_{n,k;m}\lr X.$    

Now one has a cross-section $\sigma:X\lr X\times W_{n,k;m}=W_{n,k;m}(k\xi\oplus \eta)$ defined as follows:
For any $x\in X$, let $\wt{x}\in p^{-1}(x)$ be any point in the fibre over $x\in X$. We identify $\wt{x}$ with 
$[\wt{x},1]\in \wt{X}\times_{\Gamma_m}\bc=E(\xi)$. 
Then $\sigma(x)=[\wt{x},\ldots,\wt{x};0]\in W_{n,k;m}(k\xi\oplus \eta)$ is well-defined and is independent of 
the choice of $\wt{x}$ in $p^{-1}(x)$.  Since $p:\wt{X}\lr X$, is a local homeomorphism, it is immediate that 
$\sigma$ is continuous.  

The desired map $f:X\lr W_{n,k}$ is now obtained 
as the composition $pr_2\circ \sigma$. 
\end{proof}

\begin{remark}
{\em
An analogue of the above property for real projective 
Stiefel manifolds was established in \cite{bhlsz}.  A similar universal property 
for complex projective Stiefel manifolds was established 
in \cite{agmp}, under the additional assumption that 
$X$ be a finite CW complex.   
}
\end{remark}

\section{The mod $p$ cohomology}
In this section we shall describe the mod $p$ cohomology of $W_{n,k;m}$ where $p$ is a prime.   
Recall  that $H^*(W_{n,k};\bz)$ is isomorphic to the exterior algebra 
$\Lambda_{\bz}(v_{2n-2k+1}, \ldots, v_{2n-1})$ 
where $v_q\in H^q(W_{n,k};\bz).$  This result is attributed to C. Ehresmann by Borel \cite[Prop. 9.1]{borel}.
It is 
customary to denote by $\Lambda_{\bz_p}(x_1,\ldots,x_k)$  
any graded commutative algebra $A$ over $\bz_p$ 
in which 
square-free monomials in $x_1,\ldots, x_r$ form a basis.  
(If $p$ is odd, and all the generators $x_j$ have odd degree, then $A$ is isomorphic to the exterior algebra. 
However when $p=2$, it need not be so.)  This convention
 will be used in what follows. 

\noindent
\textit{Notations}:
Let $N:=2N'$ where 
$N':=N'_p=\min_{n-k+1\leq j\leq n}\{ j\mid {n\choose j}\not\equiv 0\mod p\}$.  
(Note that the value of $N'$ depends on $n,k$ and $p$.)  In what follows, 
we shall label (homogeneous) generators of a graded algebra by their degrees. Thus $|y_j|=j$ when 
$y_j \in H^*(X;R)$.

\begin{theorem}\label{cohomology}
Suppose that $2\leq k<n$ and $m\geq 2$.\\  
(i) If $p$ is any prime not dividing $m$,  then 
\[p^*_m:H^*(W_{n,k;m};\bz_p)\cong 
H^*(W_{n,k};\bz_p)=\Lambda_{\bz_p}(v_{2n-2k+1},\ldots,v_{2n-1})\] is an isomorphism of algebras.\\
(ii) If $p$ is a an odd prime that divides $m$, then 
$$\begin{array}{ll}
H^*(W_{n,k;m};\bz_p)&\cong H^*(\bs^1;\bz_p)\otimes 
H^*(PW_{n,k};\bz_p)\\~&\cong \bz_p[y_2]/\langle y_2^{N'}\rangle \otimes \Lambda_{\bz_p}
(y_1,y_{2n-2k+1}, y_{2n-2k+3},\ldots, \hat{y}_{N-1}, \ldots, y_{2n-1})
\end{array}$$
where $N, N'$ are as defined above.  (As usual, $\hat{}$ stands for omission of the variable.)  
Also $y_2=c_1(\xi_{n,k;m})\mod p$.\\
(iii)(a) Suppose $m\equiv 2 \mod 4$.  Then 
\[H^*(W_{n,k;m};\bz_2)
=\bz_2[y_1]/\langle y_1^{N}\rangle\otimes\Lambda_{\bz_2}(y_{2n-2k+1}, y_{2n-2k+3},\ldots, 
\hat{y}_{N-1}, \ldots, y_{2n-1}).\] 

(b) Suppose that $m\equiv 0 \mod 4$. Then 
\[H^*(W_{n,k;m};\bz_2)
=\bz_2[y_2]/\langle  y_2^{N'}\rangle\otimes\Lambda_{\bz_2}(y_1,y_{2n-2k+1}, y_{2n-2k+3},\ldots, 
\hat{y}_{N-1}, \ldots, y_{2n-1}),\] 
where $y_1^2=0$. 
\end{theorem}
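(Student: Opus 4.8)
The plan is to compute $H^*(W_{n,k;m};\bz_p)$ by running the Leray--Serre spectral sequence of an appropriate fibration in each of the cases, using as input the known cohomology of $W_{n,k}$, of $PW_{n,k}$, and of $\bc P^{n-1}$. For part (i), the covering $p_m\colon W_{n,k}\lr W_{n,k;m}$ has deck group $\Gamma_m$ of order prime to $p$, so a transfer argument identifies $H^*(W_{n,k;m};\bz_p)$ with the $\Gamma_m$-invariants in $H^*(W_{n,k};\bz_p)$; since $\Gamma_m$ acts trivially on this cohomology (it is connected to the identity inside $\bs^1$, or one checks directly on the generators $v_q$), the pullback $p_m^*$ is an isomorphism. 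For part (ii), with $p$ an odd prime dividing $m$, I would use the principal $\bs^1$-bundle $\pi_m\colon W_{n,k;m}\lr PW_{n,k}$, whose Gysin sequence (equivalently the Serre spectral sequence with $E_2 = H^*(PW_{n,k};\bz_p)\otimes H^*(\bs^1;\bz_p)$) has a single differential $d_2$ determined by cup product with the Euler class $c_1(\zeta_{n,k}^m) = m\,c_1(\zeta_{n,k}) \equiv 0 \bmod p$ — so by Lemma~\ref{pullback}(i) the Euler class is zero mod $p$, the spectral sequence collapses, and one gets the stated tensor-product decomposition. Here I would invoke the known description of $H^*(PW_{n,k};\bz_p)$ from \cite{agmp}: it is $\bz_p[y_2]/\langle y_2^{N'}\rangle \otimes \Lambda(y_{2n-2k+1},\ldots,\hat y_{N-1},\ldots,y_{2n-1})$, the truncation height $N'$ being exactly the first $j$ in the relevant range with $\binom{n}{j}\not\equiv 0 \bmod p$; tensoring with $H^*(\bs^1;\bz_p) = \Lambda(y_1)$ gives the formula. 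The identification $y_2 = c_1(\xi_{n,k;m}) \bmod p$ follows from naturality of Chern classes and the already-noted fact that $\pi_m^*$ sends $c_1(\zeta_{n,k})$ to a generator of $H^2(W_{n,k;m};\bz)\cong\bz_m$.

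For part (iii), $p=2$ divides $m$, and the Euler-class argument degenerates since the mod $2$ Euler class of the $\bs^1$-bundle $\pi_m$ is $m\,c_1(\zeta_{n,k}) \bmod 2$, which vanishes exactly when $m\equiv 0\bmod 4$ but not when $m\equiv 2\bmod 4$. In the case $m\equiv 0\bmod 4$ (iii)(b), the Euler class is $0 \bmod 2$, the spectral sequence of $\pi_m$ collapses just as in (ii), and one gets $\bz_2[y_2]/\langle y_2^{N'}\rangle \otimes \Lambda_{\bz_2}(y_1, y_{2n-2k+1},\ldots,\hat y_{N-1},\ldots,y_{2n-1})$; the relation $y_1^2=0$ holds because $y_1$ comes from the fibre $\bs^1$ and transgresses to zero, so it is a genuine exterior generator (equivalently $y_1 = w_1$ of the double cover factor, whose square is pulled back from a lower cover and lands in the truncated polynomial part — I would pin this down by naturality under $p_{m,2}$). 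In the case $m\equiv 2\bmod 4$ (iii)(a), I instead use the double cover $p_{m,2}\colon W_{n,k;2}\lr W_{n,k;m}$ — wait, that points the wrong way; better, I factor through $W_{n,k;m}\lr W_{n,k;m/2}$ or, more cleanly, run the spectral sequence of $\pi_m$ directly: now the first nonzero differential is $d_2(y_1^{\mathrm{fib}}) = m\,y_2 \equiv 0$, no; the honest statement is that $\xi_{n,k;m}$ restricted mod $2$ has nontrivial $w_1$ when $m\equiv 2\bmod 4$, so the relevant bundle over $W_{n,k;2}$-type behaviour produces the polynomial generator $y_1$ of height $N$ rather than $y_2$ of height $N'$. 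I would obtain (iii)(a) by comparing with the computation for $W_{n,k;2}=\mathbb{R}P^{2n-1}$-like cases: the class $c_1(\xi_{n,k;m})\bmod 2$ is the image of $w_1$ of the real line bundle associated to the composite double cover, $y_1$ generates a truncated polynomial algebra $\bz_2[y_1]/\langle y_1^N\rangle$ with $N = 2N'_2$, and $y_2 = y_1^2$ so only one polynomial generator survives.

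The main obstacle I anticipate is part (iii)(a): unlike the odd-primary and $m\equiv 0\bmod 4$ cases, the mod $2$ spectral sequence of $\pi_m$ does \emph{not} collapse, and one must correctly track the interaction between the degree-one class coming from $\pi_1(W_{n,k;m})\cong\bz_m$ (whose mod $2$ reduction is nonzero precisely because $m$ is even) and the degree-two classes. Concretely the issue is determining the truncation height: one must show that $y_1^N = 0$ while $y_1^{N-1}\neq 0$, where $N = 2N'_2$ and $N'_2$ is the first $j\in[n-k+1,n]$ with $\binom{n}{j}$ odd. I would handle this by naturality: map in from the lens space $L^n(m) = W_{n,1;m}$ (where $k=1$ and the computation is classical: $H^*(L^n(m);\bz_2) = \bz_2[y_1]/\langle y_1^{2n}\rangle$-type for $m$ even, $2\nmid n$-adjusted), and map from the full Stiefel manifold via $p_m^*$, sandwiching the truncated polynomial part between the known answers and forcing the height to be exactly $N$. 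The bookkeeping of which odd-degree exterior generators survive — namely all $y_{2i+1}$ for $n-k+1\le i+1 \le n$ except $y_{N-1}$, which is "used up" transgressing to $y_N = y_1^N$ — is the same in all three cases and follows the pattern in \cite{agmp}, so once the height is fixed the rest is routine.
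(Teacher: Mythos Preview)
Your treatment of (i) and (ii) is essentially the paper's: the transfer argument for $p\nmid m$, and the Serre spectral sequence of the principal $\bs^1$-bundle $\pi_m$ with Euler class $c_1(\zeta_{n,k}^m)=m\,c_1(\zeta_{n,k})\equiv 0\bmod p$, so the sequence collapses and one tensors $\Lambda(y_1)$ with the known $H^*(PW_{n,k};\bz_p)$ from \cite{agmp}.

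For (iii) you have a genuine misconception. The mod $2$ Euler class of $\pi_m$ is $m\cdot c_1(\zeta_{n,k})\bmod 2$, and this vanishes whenever $m$ is even --- so in \emph{both} subcases (a) and (b), not only when $m\equiv 0\bmod 4$. Hence the spectral sequence of $\pi_m$ collapses in case (a) as well, and one always obtains $H^*(W_{n,k;m};\bz_2)\cong H^*(\bs^1;\bz_2)\otimes H^*(PW_{n,k};\bz_2)$ as an $H^*(PW_{n,k};\bz_2)$-module. The distinction between (a) and (b) is \emph{not} a differential but a multiplicative extension: what is $y_1^2$? Your remark that ``$y_1$ transgresses to zero, so it is a genuine exterior generator'' does not address this, since collapse of the spectral sequence says nothing about multiplicative extensions in the abutment; and your side comment that ``$\xi_{n,k;m}$ has nontrivial $w_1$ when $m\equiv 2\bmod 4$'' is simply false, as $\xi_{n,k;m}$ is a complex line bundle.

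The missing step, which handles both subcases at once, is this. Write $m=2l$ and let $\nu$ be the real line bundle associated to the double cover $W_{n,k;l}\to W_{n,k;m}$, so that $y_1=w_1(\nu)$. The complexification $\nu\otimes_\br\bc$ is the complex line bundle associated to the character $\Gamma_m\to\Gamma_m/\Gamma_l=\{\pm 1\}\subset\bs^1$, which sends a primitive $m$-th root $\zeta$ to $\zeta^l=-1$; this is exactly the character defining $\xi_{n,k;m}^{\,l}$, so $\nu\otimes_\br\bc\cong\xi_{n,k;m}^{\,l}$ (equivalently, it is the unique order-$2$ element of $\pic(W_{n,k;m})\cong\bz_m$, cf.\ Lemma~\ref{picard}). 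Therefore
\[
y_1^2 \;=\; w_1(\nu)^2 \;=\; c_1(\nu\otimes_\br\bc)\bmod 2 \;=\; l\,c_1(\xi_{n,k;m}) \;=\; l\,y_2,
\]
which gives $y_1^2=y_2$ when $l$ is odd ($m\equiv 2\bmod 4$) and $y_1^2=0$ when $l$ is even ($m\equiv 0\bmod 4$). In case (a) this immediately turns $\bz_2[y_2]/\langle y_2^{N'}\rangle\otimes\Lambda(y_1)$ into $\bz_2[y_1]/\langle y_1^{N}\rangle$ with $N=2N'$, without any comparison with lens spaces or separate height computation.
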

\begin{proof}
(i) Let $\gamma\in \Gamma_m$.  Recall that 
$p_m:W_{n,k}\lr W_{n,k;m}$ is the covering projection with deck transformation group $\Gamma_m$. 
The covering map $\gamma\colon W_{n,k}\lr W_{n,k}$  
is homotopic to the identity since $\gamma\in U(n)$ and $U(n)$ is connected.  
It follows that $\Gamma_m$ 
acts trivially on the cohomology groups of $W_{n,k}$. 
Since $p$ does not divide $m$, $p_m^*:H^*(W_{n,k;m};\bz_p)\lr H^*(W_{n,k};\bz_p)^{\Gamma_m}=
H^*(W_{n,k};\bz_p)$ is an isomorphism. One knows that $H^*(W_{n,k};\bz_p)
\cong \Lambda_{\bz_p}(y_{2n-k+1},\ldots, y_{2n-1})$ (see \cite[Proposition 9.1]{borel}).  

(ii)  By definition, $\xi_{n,k;m}=\pi_m^*(\zeta_{n,k})$ where $\pi_m:W_{n,k;m}\lr PW_{n,k}$ is 
the projection of the principal $U(1)/\Gamma_m\cong \bs^1$-bundle.  
(See \S 2.) Let $y_2=c_1(\zeta_{n,k})\in H^2(PW_{n,k};\bz_p)$.
We apply the Serre spectral sequence with $\bz_p$-coefficients to the principal $\bs^1$-bundle 
with projection $\pi_m$. 
The differential $d:E^{0,1}_2\lr E^{2,0}_2$ maps the generator of $E^{0,1}_2\cong H^1(\bs^1;\bz_p)
\cong \bz_p$ to $c_1(\zeta_{n,k}^m)=my_2\in H^2(PW_{n,k};\bz_p)\cong
 \bz_p y_2$ by Lemma \ref{pullback} (i).  Since $p|m$, this differential is zero. It follows that 
the spectral sequence collapses and we get 
$H^*(W_{n,k;m};\bz_p)\cong H^*(\bs^1;\bz_p)\otimes 
H^*(PW_{n,k};\bz_p)$. Note that $y_1^2=0$ as 
$p$ is odd. 
The rest of the statement follows 
from the description of the $\bz_p$-cohomology of 
$PW_{n,k}$ due to Astey, Gitler, Micha and Pastor \cite{agmp}.
  
(iii) We proceed as in (ii) and obtain 
that $H^*(W_{n,k;m};\bz_2)\cong H^*(\bs^1;\bz_2)\otimes H^*(PW_{n,k};\bz_2)$ as an  
$H^*(PW_{n,k};\bz_2)$-module.
Denoting by $\nu$  
the real line bundle 
associated to the double cover $f:W_{n,k;l}\lr W_{n,k;m}$ where $m=2l$ we have $w_1(\nu)=:y_1$ is 
the generator of $H^1(W_{n,k;m};\bz_2)$.  Also, 
$\nu\otimes_\br\bc$ is evidently an element of order $2$ in $\pic(W_{n,k;m})$ and hence 
$\nu\otimes_\br \bc 
\cong \xi_{n,k;m}^l$ by Lemma \ref{picard}.  It follows that $y_1^2=c_1(\nu\otimes\bc)=
lc_1(\xi_{n,k;m})=ly_2$. Hence  
$y_1^2=0$ if $l$ is even, and $y_1^2=y_2$ if $l$ is odd. 

Finally, write $d=\dim W_{n,k;m}$. Then $H^{d-1}(PW_{n,k};\bz_2)\otimes \bz_2y_1\cong 
H^{d}(W_{n,k;m};\\ \bz_2)\cong \bz_2$.   
Therefore $y_1y_2^{N'-1}y_{2n-2k+1}\ldots \hat{y}_{N-1} \ldots y_{2n-1}$ generates
 $H^d(W_{n,k;m};\bz_2)\\ \cong\bz_2$.    
Using this, and the property that square-free monomials in $y_{2n-2k+1},\\ \ldots,  
\hat{y}_{N-1}, \ldots, y_{2n-1}$  
are linearly independent, it follows that the same property holds for   
$y_1, y_{2n-2k+1}, \ldots, \hat{y}_{N-1}, \ldots, y_{2n-1}$.  
This completes the proof. 
\end{proof}

We now turn to the integral cohomology of $W_{n,k;m}$.
It is easily seen that $H^2(W_{n,k;m};\bz)\cong \bz_m$ 
generated by $y_2=c_1(\xi_{n,k;m})$. We are mainly interested in the height of $y_2$. 
Recall that the \textit{height} of $0\neq y\in H^q(X;R)$ is the largest positive integer 
$h$ such that $y^{h}\neq 0$.  
In view of the fact 
that the complex Stiefel manifold $W_{n,k}$ is 
$2(n-k)$-connected, we see that the $2(n-k)$-skeleton 
of $W_{n,k;m}$ with respect to any CW-structure may be 
regarded as the $2(n-k)$-skeleton of the infinite 
lens space $L^\infty(m)$ with fundamental group $\bz_m$.   
So $H^q(W_{n,k;m};\bz)\cong H^q(L^\infty(m);\bz)\cong H^q(\bz_m;\bz)$ for $q<2(n-k)$.  It is well-known 
that $H^*(L^\infty(\bz_m);\bz)\cong \bz[y_2]/\langle 
my_2\rangle$; see \cite{hatcher}.  It follows that $H^q(W_{n,k;m};\bz)\cong \bz_my^r$ where $q=2r<2n-2k$.  
However, the following theorem gives the precise value of the height. 

\begin{definition}
Fix integers $n,k,m$ such that  $m>1$ and $1\leq k<n$. We define $m_r:=m$ if $r\leq n-k$ and 
$m_r:=\gcd\{m, {n\choose j};n-k<j\leq r\}$ if $n-k<r\leq n$.  
\end{definition}

The integral cohomology ring of lens spaces is well-known. We 
shall now establish the following

\begin{theorem}\label{height}  With the above notations, the (additive) order of $y_2^r\in H^{2r}
(W_{n,k;m};\bz)$ is $m_r$ for  $1\leq r\leq n$. In particular 
the height of $y_2\in H^2(W_{n,k;m}; \bz)\cong \bz_m$ is the largest integer $h$,  $n-k<h\leq n$, 
such that $m_h>1$. 
\end{theorem}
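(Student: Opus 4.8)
The plan is to compute the order of $y_2^r$ by transporting the problem to the complex projective Stiefel manifold $PW_{n,k}$ and using the known structure of its integral cohomology. The map $\pi_m : W_{n,k;m} \lr PW_{n,k}$ pulls $c_1(\zeta_{n,k})$ back to $y_2 = c_1(\xi_{n,k;m})$, so by naturality $\pi_m^*(c_1(\zeta_{n,k})^r) = y_2^r$, and the order of $y_2^r$ divides that of the image of the integral class. For $r \leq n-k$ this is clean: the $2(n-k)$-skeleton argument already in the text identifies $H^{2r}(W_{n,k;m};\bz) \cong \bz_m$ generated by $y_2^r$, so the order is exactly $m = m_r$. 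The content is therefore in the range $n-k < r \leq n$.

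For that range, first I would recall (citing \cite{agmp} or \cite{borel}) the description of $H^*(PW_{n,k};\bz)$ through the Gysin sequence of the $\bs^1$-bundle $W_{n,k} \lr PW_{n,k}$, whose Euler class is $c_1(\zeta_{n,k})$. Since $H^*(W_{n,k};\bz)$ is an exterior algebra on generators in degrees $2n-2k+1, \ldots, 2n-1$ with no even-degree torsion in the relevant range, the Gysin sequence gives $H^{2r}(PW_{n,k};\bz) \cong \bz$ generated by $c_1(\zeta_{n,k})^r$ for $r \leq n-k$, and for $n-k < r \leq n$ the group is cyclic with the transgression controlling the order: passing the generator $v_{2j-1}$ (odd degree $2j-1$) past the Euler class multiplies by the integer ${n\choose j}$ up to sign, coming from the fact that $W_{n,k}$ is the sphere bundle picture built from $\bc^n$. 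Concretely one shows $c_1(\zeta_{n,k})^r$ has order $\gcd\{m \text{-free part aside}; {n\choose j} : n-k < j \leq r\}$ in $PW_{n,k}$ — or rather that $c_1(\zeta_{n,k})^{n-k}$ is a free generator while $c_1(\zeta_{n,k})^{n-k+s}$ generates a cyclic group of order $\gcd\{{n\choose j} : n-k < j \leq n-k+s\}$. Then I would run the Serre spectral sequence of the $\bs^1$-bundle $\pi_m$ with $\bz$-coefficients: the transgression $d_2$ of the fibre generator lands on $c_1(\zeta_{n,k}^m) = m\, c_1(\zeta_{n,k})$, so on the $E_3 = E_\infty$ page the quotient $H^{2r}(PW_{n,k};\bz)/(m\, c_1(\zeta_{n,k}) \cdot H^{2r-2})$ together with the contribution from $H^{2r-1}(PW_{n,k};\bz)\otimes H^1(\bs^1)$ assembles $H^{2r}(W_{n,k;m};\bz)$; chasing which says the order of $y_2^r$ is $\gcd$ of $m$ with the order-controlling binomials, i.e. exactly $m_r = \gcd\{m, {n\choose j} : n-k < j \leq r\}$.

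The main obstacle will be the spectral-sequence bookkeeping in the range $n-k < r \leq n$: one must be careful that $H^{2r}(W_{n,k;m};\bz)$ really is cyclic (so that "order" pins down the class up to the relevant ambiguity), and that the only differentials affecting $E^{2r,0}$ are the transgression on the $\bs^1$-fibre class — higher differentials coming from the odd-degree exterior generators $v_{2j-1}$ could in principle interfere, and one needs the binomial-coefficient transgression formula in $PW_{n,k}$ to see that their net effect on the bottom row is precisely division by the ${n\choose j}$. An alternative, perhaps cleaner, route is to use the $2(n-k)$-skeleton = lens space skeleton identification together with the fibre bundle $W_{n,k} \lr W_{n,k-1}$ (equivariant under $\Gamma_m$, giving $p : W_{n,k;m} \lr W_{n,k-1;m}$ with fibre $\bs^{2n-2k+1}$) to induct downward on $k$, at each stage attaching a single sphere and computing how $y_2^r$ changes order via the Euler class of that sphere bundle — which is exactly a binomial coefficient ${n \choose j}$. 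Either way, once the order of $y_2^r$ is shown to be $m_r$, the statement about the height of $y_2$ is immediate: $y_2^r \neq 0$ iff $m_r > 1$, and since $m_r$ is non-increasing in $r$ and equals $m > 1$ for $r \leq n-k$, the height is the largest $h$ with $n-k < h \leq n$ and $m_h > 1$ (or $h = n-k$ if no such $h$ exists, noting $k < n$ so $n - k \geq 1$).
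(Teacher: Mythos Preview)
Your outline is sound but follows a different path from the paper. You route the computation through $PW_{n,k}$: first determine the order of $c_1(\zeta_{n,k})^r$ in $H^{2r}(PW_{n,k};\bz)$ via the Gysin sequence of $W_{n,k}\to PW_{n,k}$, and then run the Serre/Gysin sequence of the $\bs^1$-bundle $\pi_m$ to descend to $W_{n,k;m}$. The paper instead builds a single fibration with \emph{simple base}: it realises $W_{n,k;m}$ up to homotopy as a Borel-type space $W'=E\times_{\Gamma_m}W_{n,k}$ with $E$ contractible, giving a $W_{n,k}$-bundle $q\colon W'\to L^\infty(m)=K(\Gamma_m,1)$. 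A map of fibrations to the universal $W_{n,k}$-bundle $BU(n-k)\to BU(n)$ identifies the transgressions of the fibre generators $v_{2j-1}$ with the Chern classes $c_{j}(n\gamma)={n\choose j}\,y_2^{j}$; since the base cohomology is just $\bz[y_2]/\langle my_2\rangle$, the order of $y_2^r$ on the bottom row $E_\infty^{2r,0}\subset H^{2r}(W';\bz)$ is immediately $\gcd\{m,{n\choose j}:n-k<j\le r\}=m_r$. What you flag as the ``main obstacle'' is real: in your second step you must compute the cokernel of $\cup\,mc_1(\zeta_{n,k})$ on all of $H^{2r-2}(PW_{n,k};\bz)$, not merely on the power $c_1(\zeta_{n,k})^{r-1}$, and that requires knowing the integral cohomology of $PW_{n,k}$ in some detail (including possible mixed relations between $c_1(\zeta_{n,k})^r$ and the odd-degree classes). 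The paper's choice of fibration over $L^\infty(m)$ collapses your two spectral sequences into one and eliminates that bookkeeping entirely.
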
 
\begin{proof} By our observation above, we need only consider the case $r>n-k$. 

Let $E$ be a contractible CW complex on which $\Gamma_m$ acts freely so that the quotient  
$E/\Gamma_m=K(\Gamma_m,1)$ has the same homotopy 
type as the infinite lens space $L^\infty (m)$.  
Then the fibre product $W':=E\times_{\Gamma_m} W_{n,k}$ 
fibres over $W_{n,k;m}$ with fibre $E$.  In particular, 
$W'$ has the same homotopy type as $W_{n,k;m}$.  Also 
one has a fibre bundle with fibre $W_{n,k}$ with projection $W'\lr K(\Gamma_m,1)$.  
We choose $E$ conveniently so that it is easier to 
determine the differential in the Serre spectral sequence 
associated to the $W_{n,k}$-bundle over $K(\Gamma_m,1)$. 

Let $E:=W_{\infty,n}=\bigcup_{r>n} W_{r,n}$ be the space
of all unitary $n$-frames in $\bc^\infty=\bigcup_{r>1}\bc^r$.   An element of $\bc^\infty$ is viewed as a column 
vector whose entries are eventually $0$. The space $E$ is contractible since the inclusion 
$W_{r,n}\subset W_{r+1,n}$ is $2(r-n)$-connected for all $r>n$. 

The group $\Gamma_m$ acts on $\bc^\infty$ via scalar multiplication and hence one has the diagonal 
action of $\Gamma_m$ on $W_{\infty,n}$. The quotient 
$W_{\infty,n}/\Gamma_m=:L$ has the homotopy type  
of the infinite lens space $L^\infty(m)$. The image of ${\bf v}=(v_1,\ldots,v_n)\in W_{\infty,n}$ in 
$L$ under the quotient map $q:W'\lr L$ will be denoted 
$[v_1,\ldots,v_n]_m$ or $ [{\bf v}]$.  The $n$-dimensional complex vector space $\bc v_1+\ldots+
\bc v_n$ will be denoted $\langle {\bf v}\rangle$. Denote by $W_k(V)$ the space of all unitary $k$-frames 
in the complex 
vector space $V\subset\bc^\infty$.

Let $W':=\{([{\bf v}]_m;u_1,\ldots, u_k)\mid {\bf v}\in W_{\infty,n}, (u_1,\ldots,u_k)\in W_k(\langle v\rangle)\}.$ 
The space $W'$ is just the $W_{n,k}$-bundle over $L$ associated to the $n$-plane bundle 
$n\gamma$ where $\gamma$ is the complex line bundle 
associated to the character $\pi_1(L)=\Gamma_m\subset \bs^1$.  Let $\wt{f}:W'\lr E/U(n-k)=BU(n-k)$ 
be defined as $\wt{f}([{\bf v}]_m;u_1,\ldots, u_k)=
(\langle{\bf v}\rangle;u_1,\ldots,u_k)$.  We let $f:L\lr E/U(n)=BU(n)$ be the map 
$[{\bf v}]_m\mapsto \langle{\bf v}\rangle$.   
One has the projection $\pi: BU(n-k)\lr BU(n)$, defined 
as $(\langle {\bf u}\rangle; u_1,\ldots, u_k) 
\mapsto  \langle {\bf u}\rangle$, of a fibre bundle 
with fibre $W_{n,k}$. This is just the projection of the $W_{n,k}$-bundle associated to the universal 
$n$-plane bundle $\gamma_{\infty, n}$.   Clearly $\pi\circ
 \wt{f}=f\circ q$ and $f^*(\gamma_{\infty, n})\cong n\gamma$.   Thus the following diagram commutes:
\[
\begin{array}{ccc}
W' & \stackrel{{\wt f}}{\lr} & BU(n-k)\\
q\downarrow &&\downarrow \pi\\
L&\stackrel{f}{\lr}& BU(n).\\
\end{array}
\]
The $W_{n,k}$-bundle $W'\lr L$ is the pull-back 
of the bundle $BU(n-k)\lr BU(n)$.  In particular the 
former bundle is $\bz$-orientable.
We consider the Serre spectral sequence of the $W_{n,k}$-bundle $q:W'\lr L$ which 
converges to  $H^*(W';\bz)\cong H^*(W_{n,k;m};\bz)$.    
We have $E_2^{p,q}=H^p(L;H^q(W_{n,k};\bz))
=H^p(L^\infty(m);\bz)\otimes H^q(W_{n,k};\bz)$ since 
$H^*(W_{n,k};\bz)=\Lambda_\bz(y_{2n-2k+1},\ldots, y_{2n-1})$ is free abelian.
It is well-known that $H^*(L^\infty(m);\bz)
\cong \bz[y_2]/\langle my_2\rangle$.  By comparing the 
Serre spectral sequence of $\pi:BU(n-k)\lr BU(n)$, we 
see that the cohomology classes $y_{2n-2k+2j-1}, 1\leq j\leq k,$ are transgressive. Indeed $\tau(y_{2n-2k+2j-1})
=c_{2n-2k+2j}(n\gamma)\in H^*(L;\bz), 1\leq j\leq k$.
That is $\tau(y_{2n-2k+2j-1})={n\choose k-j}y_2^{n-k+j}$.  It follows that ${n\choose j}y_2^r=0$ in $H^{2r}(W';\bz)$ 
for $n-k<j\leq r$ and so, since $my_2=0$,
 we see that the order of $y_2^{r}\in H^{2n-2k+2j}(W';\bz)$ equals $m_r$. 
In particular, the height $h$ of $y_2$ is as stated in the theorem.  \end{proof}

A complete description of the ring structure of $H^*(W_{n,k;m};\bz)$ appears to be more intricate.  
However, it is clear from the above 
proof that the torsion subgroup in $H^*(W_{n,k;m};\bz)$ 
is generated by the $y_2^j, 1\leq j< h$.  Also, it can be seen readily that there exist classes 
$v_{2n-2k+2j-1}\in H^*(W_{n,k;m};\bz)$, $ 1\leq j\leq k,$ which generate 
a free abelian group of rank $k$. Furthermore, their  
reduction mod any prime $p$ not dividing $m$ are 
the generators of $H^*(W_{n,k;m};\bz_p)$ given in 
Theorem \ref{cohomology}(i).  They arise from the generators of the 
kernel of the transgression in the spectral sequence 
in the above proof.  

As an application we have the following theorem.  We write $p(M)$ (resp. $w(M)$ for the 
total Pontrjagin class (resp. total Stiefel-Whitney class) of a differentiable manifold $M$.  
(See \cite{ms}.)

\begin{proposition}  \label{pontrjagin}
Let $2\leq k\leq n-2.$ and let $m\geq 2$.
One has  $p(W_{n,k;m})=(1+y_2^2)^{nk}$ for all $r\geq 1$.  The total Stiefel-Whitney class 
$w(W_{n,k;m})=(1+y_1^2)^{nk}$, where it is understood that $y_1=0$ when $m$ is odd.  \\
\end{proposition}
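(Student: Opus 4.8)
The plan is to deduce both formulas from the stable bundle isomorphism (4), namely $\tau W_{n,k;m}\oplus k^2\varepsilon_\br\cong nk\,\xi_{n,k;m}^\vee$, combined with the cohomological input of \S3. Throughout write $y_2=c_1(\xi_{n,k;m})$, so that $c_1(\xi_{n,k;m}^\vee)=-y_2$ by the description of $\xi_{n,k;m}$ in \S2.

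For the Pontrjagin class I would first observe that, Pontrjagin classes being stable and trivial summands contributing nothing, (4) gives $p(W_{n,k;m})=p(nk\,\xi_{n,k;m}^\vee)$. Now $nk\,\xi_{n,k;m}^\vee$ is the underlying real bundle of a complex bundle $\omega$ of rank $nk$ with total Chern class $c(\omega)=(1-y_2)^{nk}$ and $c(\bar\omega)=(1+y_2)^{nk}$. Applying the standard identity $\sum_i(-1)^ip_i(\omega_\br)=c(\omega)\,c(\bar\omega)$ yields
\[
\sum_i(-1)^ip_i(W_{n,k;m})=(1-y_2^2)^{nk}=\sum_i(-1)^i\binom{nk}{i}y_2^{2i},
\]
hence $p_i(W_{n,k;m})=\binom{nk}{i}y_2^{2i}$ and $p(W_{n,k;m})=(1+y_2^2)^{nk}$. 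Routing the computation through the realification, rather than applying the Whitney product formula directly to $nk$ copies of $(\xi_{n,k;m})_\br$, sidesteps the $2$-torsion ambiguity in the product formula for Pontrjagin classes, which is relevant here since $H^*(W_{n,k;m};\bz)$ has torsion by Theorem \ref{height}.

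For the Stiefel--Whitney class, (4) together with the exactness of the Whitney formula for $w$ gives $w(W_{n,k;m})=w(nk\,\xi_{n,k;m}^\vee)=\bigl(w((\xi_{n,k;m})_\br)\bigr)^{nk}$. Since the underlying real $2$-plane bundle of a complex line bundle is oriented with $w_1=0$ and $w_2$ equal to the mod $2$ reduction $\bar y_2$ of its first Chern class, this equals $(1+\bar y_2)^{nk}$. It then remains to express $\bar y_2\in H^2(W_{n,k;m};\bz_2)$ through the generators of Theorem \ref{cohomology}: when $m$ is odd, $H^2(W_{n,k;m};\bz_2)=0$ by Theorem \ref{cohomology}(i), so $\bar y_2=0$ and $w(W_{n,k;m})=1$; when $m\equiv 2\bmod 4$, the proof of Theorem \ref{cohomology}(iii)(a) identifies $\bar y_2$ with $y_1^2$; and the case $m\equiv 0\bmod 4$ is handled analogously from Theorem \ref{cohomology}(iii)(b). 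In all cases this gives $(1+y_1^2)^{nk}$, with the convention $y_1=0$ for $m$ odd. As a byproduct one recovers $w_1(W_{n,k;m})=w_2(W_{n,k;m})=0$ whenever $nk$ is even.

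Neither computation is hard; the only step needing genuine care is the Stiefel--Whitney part, where one must correctly read off $w_2(W_{n,k;m})$ as the image of $c_1(\xi_{n,k;m})$ in mod $2$ cohomology — which is a genuine square of the one-dimensional generator precisely when $m\equiv 2\bmod 4$ — and, for the Pontrjagin part, use the realification description rather than the Whitney formula, which holds only modulo $2$-torsion.
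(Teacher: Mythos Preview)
Your approach coincides with the paper's: both derive $p$ and $w$ from the stable isomorphism~(4), computing $c(\tau\otimes_\br\bc)=(1-y_2^2)^{nk}$ for the Pontrjagin class and $w(\xi_{n,k;m})^{nk}$ for the Stiefel--Whitney class. The Pontrjagin argument is correct and essentially identical to the paper's (the paper phrases it as complexifying $\tau$; you phrase it via the realification identity $\sum_i(-1)^ip_i=c(\omega)c(\bar\omega)$, which is the same computation).

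There is, however, a genuine gap in your Stiefel--Whitney argument at the step you dismiss as ``handled analogously'' for $m\equiv 0\bmod 4$. In that case Theorem~\ref{cohomology}(iii)(b) gives $y_1^2=0$, whereas the mod~$2$ reduction $\bar y_2$ of $c_1(\xi_{n,k;m})$ is the \emph{nonzero} generator $y_2\in H^2(W_{n,k;m};\bz_2)$ (indeed, in the proof of Theorem~\ref{cohomology}(iii) one has $y_1^2=l y_2$ with $m=2l$, and this vanishes precisely when $l$ is even). Hence $w(W_{n,k;m})=(1+y_2)^{nk}$, which in general is \emph{not} equal to $(1+y_1^2)^{nk}=1$; for instance with $n=5$, $k=3$, $m=4$ one has $N'_2=4$ and $w_2(W_{5,3;4})=y_2\neq 0$. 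So the identity $\bar y_2=y_1^2$ that you need holds only for $m\not\equiv 0\bmod 4$, and your case analysis does not close in the remaining case. The paper's own one-line proof likewise asserts $w(\xi_{n,k;m})=1+y_1^2$ without comment, so the discrepancy is really with the formulation of the proposition: the correct uniform statement is $w(W_{n,k;m})=(1+\bar y_2)^{nk}$, which specializes to $(1+y_1^2)^{nk}$ when $m$ is odd or $m\equiv 2\bmod 4$, and to $(1+y_2)^{nk}$ when $m\equiv 0\bmod 4$.
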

\begin{proof}
Consider the 
complexified tangent bundle $\tau_\bc:=\tau W_{n,k;m}\otimes_\br\varepsilon_\bc$.  From (4), $\tau_\bc$ 
is stably equivalent to the complex vector bundle 
$nk(\xi_{n,k;m}\oplus \xi_{n,k;m}^\vee)$. Therefore 
$c(\tau_\bc)=(1+y_2)^{nk}(1-y_2)^{nk}=(1-y_2^2)^{nk}$.   It follows that the $j$-th Pontrjagin class $p_j(W_{n,k;m}) 
={nk\choose j}y_2^{2j}$.

Using (4) we get 
 $w(W_{n,k;m})=w(\xi_{n,k;m})^{nk}=(1+y_1^2)^{nk}$.
 \end{proof}

Recall from Theorem \ref{span} that $W_{n,n-1;m}$ is parallelizable for all $m$.  The rest of the $W_{n,k;m}$ 
are \textit{not} stably parallelizable for most values of $m$.

\begin{theorem}\label{sparallelizability}
Let $1<k\leq n-2$ and $m\geq 2$.     
If there exists an $r\geq 1$ such that ${nk\choose r}$ is not divisible by $m_{2r}$, then 
$W_{n,k;m}$ is not stably parallelizable.   In particular, if $W_{n,k;m}$ is stably parallelizable,
then $m$ divides $nk$. 
\end{theorem}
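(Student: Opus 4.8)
The plan is to use the Pontrjagin and Stiefel-Whitney class computations from Proposition \ref{pontrjagin} together with the order computation for $y_2^r$ in Theorem \ref{height}. Recall that if $W_{n,k;m}$ is stably parallelizable, then all its Pontrjagin classes and Stiefel-Whitney classes (in positive degrees) must vanish. By Proposition \ref{pontrjagin} we have $p_r(W_{n,k;m}) = \binom{nk}{r} y_2^{2r}$ for each $r \geq 1$. Now $y_2^{2r} \in H^{4r}(W_{n,k;m};\bz)$ has additive order exactly $m_{2r}$ by Theorem \ref{height} (taking the index ``$r$'' there to be $2r$ here). Hence $\binom{nk}{r} y_2^{2r} = 0$ forces $m_{2r} \mid \binom{nk}{r}$. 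Contrapositively, if there exists $r \geq 1$ with $m_{2r} \nmid \binom{nk}{r}$, then $p_r(W_{n,k;m}) \neq 0$, so $W_{n,k;m}$ is not stably parallelizable. This proves the first assertion directly.

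For the ``in particular'' clause, first I would take $r = 1$. Then $m_2 = \gcd\{m, \binom{n}{j} : n-k < j \leq 2\}$ if $n-k < 2$, i.e. if $k \geq n-1$; but our hypothesis is $k \leq n-2$, so $2 \leq n-k$ and therefore $m_2 = m$ by the definition preceding Theorem \ref{height}. Thus the order of $y_2^2$ is exactly $m$. If $W_{n,k;m}$ is stably parallelizable then $p_1(W_{n,k;m}) = nk \cdot y_2^2 = 0$, which forces $m \mid nk$. This gives the stated necessary condition.

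The main point to get right is the indexing in the application of Theorem \ref{height}: that theorem computes the order of $y_2^r \in H^{2r}$, and the Pontrjagin class $p_r$ involves $y_2^{2r} \in H^{4r}$, so one must apply the theorem with its parameter equal to $2r$, and one must check $2r \leq n$ for the statement to be meaningful — but when $2r > n$ the class $y_2^{2r}$ already vanishes (its order $m_{2r}$ degenerates), so $p_r = 0$ automatically and such $r$ simply do not contribute obstructions; the hypothesis is only ever useful for $r$ with $2r \le n$. I do not anticipate a genuine obstacle here: the theorem is a straightforward corollary of the two preceding results, the only care needed being the bookkeeping of which power of $y_2$ appears and the verification that $m_2 = m$ under the hypothesis $k \le n-2$. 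One should also note that the Stiefel-Whitney classes give no additional information beyond what the Pontrjagin classes already yield, since $w(W_{n,k;m}) = (1+y_1^2)^{nk}$ and $y_1^2$ is either $0$ or a $2$-torsion multiple of $y_2$, so the Pontrjagin obstruction is the sharper one; hence it suffices to argue with $p(W_{n,k;m})$ alone.
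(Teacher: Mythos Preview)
Your proposal is correct and follows essentially the same route as the paper: compute $p_r(W_{n,k;m})=\binom{nk}{r}y_2^{2r}$ from Proposition~\ref{pontrjagin}, invoke Theorem~\ref{height} to see that $y_2^{2r}$ has order $m_{2r}$, and for the second assertion observe that $k\le n-2$ forces $m_2=m$. Your write-up is in fact a bit more explicit than the paper's about why $\binom{nk}{r}y_2^{2r}\neq 0$ (the paper just notes $m_{2r}>1$ implies $y_2^{2r}\neq 0$ and then asserts the conclusion), and your remarks on the range $2r\le n$ and on the redundancy of the Stiefel--Whitney information are accurate and harmless additions.
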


\begin{proof}
 If ${nk\choose r}$ is not divisible by $m_{2r}$, then $m_{2r}>1$ and so $h>2r$. Therefore 
 $p_r(W_{n,k;m})={nk\choose r} y^{2r} \neq 0$ by Proposition \ref{pontrjagin}.   It follows that $W_{n,k;m}$ 
is not stably parallelizable (cf. \cite[Lemma 15.2]{ms}).    
 
 As for the second assertion, since $h>n-k\geq 2,$ one has $y_2^2\neq 0$. If $W_{n,k;m}$ is stably parallelizable, 
then $p_1(W_{n,k;m})=nky_2^2=0$ and hence $m_2=m$ divides $nk$.   
\end{proof}

\begin{remark}  {\em 
 The above theorem does not settle completely the question of stable parallelizability of 
$W_{n,k;m}$.  Suppose that  $n, k$ are powers of a prime $p$ and $m=p$.  Then $h=n$ and $m_r=p, \forall  r<n$.  
In this case, $p_j(W_{n,k;m})=0, w_j(W_{n,k;m})=0$ for all $j>0$.   
We remark that in the case of lens spaces $L^n(p)$ where $p$ is an odd prime,  Kambe \cite{kambe} has 
obtained non-immersion results using K-theory calculations. 
 Combined with the work of Adams \cite{adams} on the order of  the Hopf bundle 
$\xi_{n,1;2}$, one obtains that for a fixed $n$,  for all but finitely many $m>1$, 
the lens spaces are not stably parallelizable.  
}
\end{remark}

\noindent
{\bf Acknowledgment:} We thank J\'ulius Korba\v{s} for a careful reading of an earlier 
version of this paper and for his valuable comments. Also 
we  thank the referees for their valuable comments and suggestions.  We owe Remark 2.6 and an improved 
Theorem 2.4(i) to one of them.  The first named author thanks the Institute of 
Mathematical Sciences, Chennai, for its hospitality 
where this work was carried out.


\end{document}